\theoremstyle{plain}
\newtheorem{thm}{Theorem}[section]
\newtheorem*{thm*}{Theorem}
\newtheorem*{cor*}{Corollary}
\newtheorem{prop}[thm]{Proposition}
\newtheorem{lem}[thm]{Lemma}
\newtheorem{cor}[thm]{Corollary}
\newtheorem{claim}{Claim}
\newtheorem*{claim*}{Claim}
\theoremstyle{definition}
\newtheorem{defn}[thm]{Definition}
\newtheorem{ex}[thm]{Example}
\theoremstyle{remark}
\numberwithin{equation}{thm}
\def\Im{\operatorname{Im}}
\def\Ker{\operatorname{Ker}}
\def\bbZ{\mathbb{Z}}
\def\Max{\operatorname{Max}}
\def\m{\mathfrak m}
\def\p{\mathfrak p}
\newcommand{\rme}{\mathrm{e}}
\newcommand{\rmH}{\mathrm{H}}
\newcommand{\rmK}{\mathrm{K}}
\newcommand{\rmQ}{\mathrm{Q}}
\newcommand{\fkm}{\mathfrak{m}}
\newcommand{\fkp}{\mathfrak{p}}
\newcommand{\mapright}[1]{%
\smash{\mathop{%
\hbox to 1cm{\rightarrowfill}}\limits^{#1}}}
\newcommand{\mapleft}[1]{%
\smash{\mathop{%
\hbox to 1cm{\leftarrowfill}}\limits_{#1}}}
\def\depth{\operatorname{depth}}
\def\height{\mathrm{ht}}
\def\Spec{\operatorname{Spec}}
\title[Topics on strict closure of rings]{Topics on strict closure of rings}
\author[Naoki Endo]{Naoki Endo}
\address{Department of Mathematics, Faculty of Science Division II, Tokyo University of Science, 1-3 Kagurazaka, Shinjuku, Tokyo 162-8601, Japan}
\email{nendo@rs.tus.ac.jp}
\urladdr{https://www.rs.tus.ac.jp/nendo/}
\author[Shiro Goto]{Shiro Goto}
\address{Department of Mathematics, School of Science and Technology, Meiji University, 1-1-1 Higashi-mita, Tama-ku, Kawasaki 214-8571, Japan}
\email{shirogoto@gmail.com}
\author[Ryotaro Isobe]{Ryotaro Isobe}
\address{Department of Mathematics and Informatics, Graduate School of Science, Chiba University, 1-33 Yayoi-cho, Inage-ku, Chiba 263-8522, Japan}
\email{r.isobe.math@gmail.com}
\thanks{2020 {\em Mathematics Subject Classification.} 13A15, 13B22, 13B30.}
\thanks{{\em Key words and phrases.} Strictly closed ring, Arf ring, weakly Arf ring}
\thanks{The first author was partially supported by JSPS Grant-in-Aid for Young Scientists 20K14299 and JSPS Overseas Research Fellowships. The second author was partially supported by JSPS Grant-in-Aid for Scientific Research (C) 16K05112. The third author was partially supported by Grant-in-Aid for JSPS Fellows 20J10517.}
\begin{document}

\maketitle

\setlength{\baselineskip} {16pt}

\begin{abstract}
In 1971, J. Lipman \cite{L} introduced the notion of strict closure of a ring in another, and established the underlying theory in connection with a conjecture of O. Zariski.
In this paper, for further developments of the theory, we investigate three different topics related to strict closure of rings. 
The first one concerns construction of the closure, and the second one is the study regarding the question of whether the strict closedness is inherited under flat homomorphisms. We finally handle the question of when the Arf closure coincides with the strict closure. Examples are explored to illustrate our theorems.


\end{abstract}



\section{Introduction}\label{intro}

In this paper, for an extension of commutative rings $S/R$, we investigate the structure of an intermediate ring $R^*$, called {\it the strict closure of $R$ in $S$}, between $R$ and $S$ consisting of those elements $\alpha$ in $S$ such that $\alpha\otimes 1=1\otimes \alpha$ in $S\otimes_R S$. We say that $R$ is {\it strictly closed} in $S$, if $R=R^*$. Since $[R^*]^*=R^*$ in $S$, $R^*$ is strictly closed in $S$. In addition, if $T$ is an intermediate ring between $R$ and $S$, then $R^* \subseteq T^*$ in $S$ (\cite[Section 4, p.672]{L}). Hence $[-]^*$ is a closure operation. The ring $R$ is called {\it strictly closed}, provided $R$ is strictly closed in $\overline{R}$, where $\overline{R}$ denotes the integral closure of $R$ in its total ring $\operatorname{Q}(R)$ of fractions. 
The notion of strictly closed rings was introduced by J. Lipman \cite{L} in 1971, and he established the underlying theory, revealed a connection with Zariski's theory of saturation of rings.

A typical example of strictly closed rings is the Stanley-Reisner algebra of a simplicial complex, and we also encounter strictly closed rings arising from the Rees algebras of certain ideals (\cite[Theorem 3.3, Corollary 2.4]{EG}). Moreover, except the modular case, the invariant subrings of strictly closed rings under a finite group action are strictly closed (\cite[Corollary 13.6]{CCCEGIM}). Thus, strictly closed rings appear in diverse branches of fields related to commutative algebra. Among them, this is what matters most is a relation between the theory of Arf rings, defined by J. Lipman in the same paper \cite{L}.

Let $R$ be a Noetherian semi-local ring and assume that $R$ is {\it a Cohen-Macaulay ring of dimension one}, i.e., for every $M \in \Max R$, the local ring $R_M$ is Cohen-Macaulay and of dimension one. We say that $R$ is {\it an Arf ring}, if the following conditions are satisfied (\cite[Definition 2.1]{L}):
\begin{enumerate}[$(1)$]
\item Every integrally closed ideal $I$ in $R$ that contains a non-zerodivisor has {\it a principal reduction}, i.e. $I^{n+1} = a I^n$ for some $n \ge 0$ and $a \in I$. 
\item Let $x,y,z \in R$ such that $x$ is a non-zerodivisor on $R$ and $\frac{y}{x}, \frac{z}{x} \in \overline{R}$. Then $\frac{yz}{x} \in R$.
\end{enumerate}


Suppose now that $\overline{R}$ is a module-finite extension of $R$. Then, among all the Arf rings between $R$ and its integral closure $\overline{R}$, there is a smallest one, which is called {\it the Arf closure of $R$}. The connection between Arf and strict closures is expressed as a conjecture of O. Zariski. As is mentioned in \cite{L}, O. Zariski conjectured that the strict closure of $R$ in $\overline{R}$ coincides with the Arf closure of $R$. As for this conjecture, O. Zariski proved the {\it if} part, and J. Lipman gave an affirmative answer for the converse, when $R$ contains a field (\cite[Proposition 4,5, Theorem 4.6]{L}). Until this was settled in \cite[Theorem 4.4]{CCCEGIM} with full generality, it seems that Zariski's conjecture has been open for more than one half of a century.





Recently, the authors of \cite{CCCEGIM} introduced the notion of weakly Arf rings, towards a theory of higher dimensional Arf rings, by weakening the defining conditions of Arf rings. For an arbitrary commutative ring $R$, the ring $R$ is called {\it weakly Arf}, if $R$ satisfies condition $(2)$ in the above definition. As J. Lipman predicted in \cite{L} and as the authors of \cite{CCCEGIM} are  now developing the theory of weakly Arf rings, there might have a strong connection between the strict closedness and the weak Arf property. In fact, for an arbitrary commutative ring $R$, there exists {\it the weakly Arf closure $R^a$ of $R$} which is a smallest weakly Arf ring between $R$ and $\overline{R}$, and the following holds.


\begin{thm}[{\cite[Corollary 7.7]{CCCEGIM}}]\label{1.2}
Let $R$ be a Noetherian ring. Suppose that $\overline{R}$ is a finitely generated $R$-module and one of the following conditions:
\begin{enumerate}[$(1)$]
\item
$R$ contains an infinite field.
\item
${\rm ht}_R M\ge 2$ for every $M\in \Max R$.
\end{enumerate}
If the weakly Arf closure $R^a$ of $R$ satisfies the condition $(S_2)$ of Serre, then $R^a=R^*$.
\end{thm}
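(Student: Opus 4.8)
The plan is to show the two inclusions $R^a \subseteq R^*$ and $R^* \subseteq R^a$ separately, the first being formal and the second being where the hypotheses enter. The inclusion $R^a \subseteq R^*$ should follow from the abstract properties of the two closure operations: $R^*$ is an intermediate ring between $R$ and $\overline R$ that is strictly closed, and by Lipman's results (or the settled Zariski conjecture, \cite[Theorem 4.4]{CCCEGIM}) a strictly closed Cohen--Macaulay ring of dimension one is Arf, hence weakly Arf; since $R^a$ is the \emph{smallest} weakly Arf ring between $R$ and $\overline R$ we would get $R^a \subseteq R^*$. One must first reduce to the relevant setting — localize and complete at each maximal ideal so that we are in the one-dimensional Cohen--Macaulay situation where ``Arf'' and ``strictly closed'' have their classical meaning, using that both $(-)^*$ and $(-)^a$ commute with localization and that the hypotheses on $\overline R$ and on heights pass to these localizations.

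**For the reverse inclusion** $R^* \subseteq R^a$, the idea is to exploit the hypothesis that $R^a$ satisfies Serre's condition $(S_2)$. A ring satisfying $(S_2)$ that sits between $R$ and $\overline R$ and is module-finite over $R$ is quite rigid: by the Serre criterion combined with $(R_0)$-type information coming from $R^a \subseteq \overline R$, such a ring tends to already be integrally closed, or at least to absorb the strict closure. The key step will be to prove that $R^a$ is itself strictly closed in $\overline R$, i.e. $(R^a)^* = R^a$. Once we know $(R^a)^* = R^a$, monotonicity of $[-]^*$ (recorded in the introduction: $T \subseteq T'$ implies $T^* \subseteq (T')^*$, and more to the point $R \subseteq R^a$ gives $R^* \subseteq (R^a)^* = R^a$) finishes the argument. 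So the whole proof funnels into: \emph{an $(S_2)$ weakly Arf ring between $R$ and $\overline R$ is strictly closed}.

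**To establish that** $(R^a)^* = R^a$, I would argue as follows. Set $A = R^a$ and $B = \overline A = \overline R$. Since $A$ is weakly Arf and Noetherian with $\overline A$ module-finite, in dimension one $A$ is Arf (here one uses the hypothesis — either the infinite field, or ${\rm ht}\ge 2$ which forces us instead to argue via $(S_2)$ directly rather than reducing to dimension one everywhere). The conductor-theoretic description of strict closure says $\alpha \in A^*$ iff $\alpha \otimes 1 = 1 \otimes \alpha$ in $B \otimes_A B$; equivalently, after localizing, the element $\alpha$ together with $A$ generates a ring whose trace/different behaves like that of $A$. The condition $(S_2)$ on $A$ ensures $A = \bigcap_{\height \p = 1} A_\p$, so it suffices to check strict closedness in codimension one, where $A_\p$ is a one-dimensional Cohen--Macaulay Arf ring; and a one-dimensional Arf ring \emph{is} strictly closed in its normalization — this is precisely the ``if'' direction of Zariski's conjecture, proved by Zariski and available via \cite[Theorem 4.4]{CCCEGIM}. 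Patching the codimension-one statements back together using $(S_2)$ gives $A^* = A$.

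**The main obstacle** I expect is the passage between the global statement and the one-dimensional local statements when we are in case $(2)$, where $\Max R$ has height $\ge 2$ and there is no infinite field to perform the usual general-position/principal-reduction arguments: here the $(S_2)$ hypothesis is doing essential work, and one has to be careful that the Arf-implies-strictly-closed fact is only literally available in dimension one, so the reduction to codimension-one primes of $R^a$ must be set up so that at each height-one prime the localization is genuinely a one-dimensional Cohen--Macaulay ring with module-finite normalization — this requires knowing $R^a$ is Noetherian and excellent-like enough for normalization to localize well, which should come from $\overline{R^a} = \overline R$ being module-finite over $R$ hence over $R^a$. A secondary subtlety is checking that ``weakly Arf'' localizes to ``Arf'' at height-one primes, i.e. that condition $(1)$ (existence of principal reductions of integrally closed ideals) is automatic there; in case $(1)$ the infinite field handles this, and in case $(2)$ one reduces to residue fields that can be enlarged, or argues that the relevant integrally closed ideals in a one-dimensional $(S_2)$ ring between $R_\p$ and $\overline{R_\p}$ automatically have principal reductions. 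I would organize the write-up so that these two cases are treated in parallel after the common reduction, flagging exactly where the field hypothesis versus the height hypothesis is invoked.
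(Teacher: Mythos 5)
The paper does not actually prove Theorem~\ref{1.2}: it is quoted verbatim from \cite[Corollary~7.7]{CCCEGIM}, and even the first inclusion $R^a\subseteq R^*$ is recorded in Section~4 only as a consequence of the cited fact that the strict closure $R^*$ (of any Noetherian $R$ with $\overline R$ module-finite) is a weakly Arf ring. So there is no in-paper proof to compare against, and what follows is an assessment of your proposal on its own merits and against the structure the authors clearly have in mind (visible, for instance, in the commented-out reduction ``$R^a=R^*$ iff $[R^a]_P$ is strictly closed for every $P\in X_1(R^a)$'').

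Your overall skeleton is the right one: the easy inclusion is $R^a\subseteq R^*$, and the real content is showing that $R^a$ itself is strictly closed, using $(S_2)$ to reduce to height-one localizations of $R^a$ where weak Arf upgrades to Arf (given a principal-reduction input) and Arf gives strictly closed, then recovering the global statement from $R^a=\bigcap_{\height\fkp=1}[R^a]_\fkp$ together with the compatibility of strict closure with localization. Two points, however, need repair. First, your justification of $R^a\subseteq R^*$ is a non sequitur: you propose to ``localize and complete at each maximal ideal so that we are in the one-dimensional Cohen--Macaulay situation,'' but hypothesis~(2) explicitly forces $\height_R M\ge 2$, and localizing at a maximal ideal does not reduce the dimension to one. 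The correct and direct route is simply to invoke that $R^*$ is weakly Arf in full generality (\cite[Theorem~4.5]{CCCEGIM}) and that $R^a$ is the smallest weakly Arf ring between $R$ and $\overline R$; no dimension-one reduction is needed or available here. Second, you leave the role of hypothesis~(2) vague (``residue fields that can be enlarged, or argues that\dots automatically have principal reductions''). The clean mechanism is that under $\height_R M\ge 2$ every height-one prime $\fkp$ of $R$ (hence of the integral extension $R^a$) is non-maximal, so $R/\fkp$ is a Noetherian domain that is not a field, hence infinite, and therefore $\kappa(\fkp)$ is infinite; this supplies exactly the infinite-residue-field input needed for principal reductions in the one-dimensional localizations, in parallel with hypothesis~(1). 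With these two repairs, and taking on board the cited facts about localization of strict and weakly Arf closures and the $(S_2)$-intersection formula $R^a=\bigcap_{\height\fkp=1}[R^a]_\fkp$ inside $\rmQ(R)$, your argument would close up correctly.
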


In contrast, the first and second authors \cite{EG} gave a practical method of construction of strictly closed rings. In general, it is difficult to compute the strict closure of rings. They constructed some concrete examples of them, and showed that the Stanley-Reisner rings of simplicial complexes (resp. $F$-pure rings satisfying $(S_2)$) are strictly closed.
The reader may consult with \cite{L, CCCEGIM, EG} about further study of strictly closed rings.

In the present paper, we investigate three topics related to strict closure of rings. The first one concerns construction of strictly closed rings. In \cite{CCCEGIM, EG}, the authors handled only the case where the overring $S$ is the integral closure $\overline{R}$. In Section 2, we consider an arbitrary extension of commutative rings $S/R$, and give an upper bound for the strict closure of $R$ in $S$, using a presentation matrix of $S$; see Theorem \ref{2.6}. 

As a second, in Section 3, we explore the question of how the strict closedness is inherited under flat homomorphisms. J. Lipman proved in \cite{L} that, for a faithfully flat homomorphism $\varphi: R \to S$ of rings, if $S$ is strictly closed, then so is $R$. Theorem \ref{3.1} shows that $S$ is strictly closed if and only if so is $R$, once the  homomorphism $\varphi: R\to S$ satisfies additional assumptions. 

Notice that the weakly Arf closure $R^a$ coincides with Arf closure when $R$ is a Cohen-Macaulay semi-local ring of dimension one and $\overline{R}$ is a finitely generated $R$-module. Moreover, since $R^*$ is a weakly Arf ring, $R^a\subseteq R^*$ holds (\cite[Section 7]{CCCEGIM}).
Finally, Section 4 deals with a generalization of Zariski's conjecture, that is the question of when the equality $R^a=R^*$ holds. Theorem \ref{1.2} requires that $R^a$ satisfies $(S_2)$. In the present paper, we focus on the case where $R^a$ does not satisfy the condition, while introducing examples.

Throughout this paper, unless otherwise specified, let $R$ be an arbitrary commutative ring, and $\overline{R}$ the integral closure of $R$ in its total ring $\operatorname{Q}(R)$ of fractions.







\section{Upper bounds for strict closures}

Let $S/R$ be an extension of commutative rings, and we set
$$
R^{*}=\left\{ \alpha \in S \mid \alpha \otimes 1 = 1 \otimes \alpha \text{ in } S \otimes_RS\right\}
$$
which is a kernel of an $R$-linear map 
$$
\sigma: S \to S \otimes_RS, \ \  \alpha \mapsto \alpha \otimes 1 - 1 \otimes \alpha.
$$
Then $R^*$ forms a subring of $S$ containing $R$, which is called {\it the strict closure} of $R$ in $S$. We say that $R$ is {\it strictly closed} in $S$, if $R=R^{*}$. Since $[R^*]^*=R^*$ in $S$, $R^*$ is strictly closed in $S$. Moreover, if $T$ is an intermediate ring between $R$ and $S$, then $R^* \subseteq T^*$ in $S$; see \cite[Section 4, p.672]{L}. Thus $[-]^*$ is actually a closure operation. We simply say that $R$ is {\it strictly closed}, when $R$ is strictly closed in $\overline{R}$, where $\overline{R}$ denotes the integral closure of $R$ in its total ring $\operatorname{Q}(R)$ of fractions. The reader may consult with \cite{L, CCCEGIM, EG} about properties of strict closures.

We begin with the following.

\begin{lem}\label{2.1}
Let $\varphi : R \to S$ be a homomorphism of rings. Suppose that $S$ is free as an $R$-module and that $1\in R$ is a part of a free basis of $S$. Then the sequence
$$
0 \to R \overset{\varphi}{\to} S \overset{\sigma}{\to} S\otimes_R S
$$
is exact as an $R$-module. Hence, $\varphi(R)$ is strictly closed in $S$.
\end{lem}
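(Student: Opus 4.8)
The plan is to write $S = R \cdot 1 \oplus F$ as an $R$-module, where $F$ is the free $R$-submodule spanned by the remaining basis elements, so that $1 \in R$ is genuinely a part of a free basis. Injectivity of $\varphi$ is immediate from this direct sum decomposition, since $\varphi(R) = R \cdot 1$ is a free rank-one summand. The content is the exactness at $S$, i.e. $\Ker \sigma = \varphi(R)$; the inclusion $\varphi(R) \subseteq \Ker \sigma$ is clear because $r \cdot 1 \otimes 1 = 1 \otimes r \cdot 1$ in $S \otimes_R S$ for $r \in R$, so the real work is the reverse inclusion $\Ker \sigma \subseteq \varphi(R)$.

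For that, the key step is to tensor the decomposition $S = R \cdot 1 \oplus F$ on the left by $S$, which gives $S \otimes_R S = (S \otimes_R R\cdot 1) \oplus (S \otimes_R F) = S \oplus (S \otimes_R F)$, where the first summand is identified via $s \otimes r\cdot 1 \mapsto sr$. Now take $\alpha \in S$ and write $\alpha = a\cdot 1 + \beta$ with $a \in R$ and $\beta \in F$. First I would unwind the images of $\alpha \otimes 1$ and $1 \otimes \alpha$ under this identification: $\alpha \otimes 1 = \alpha \otimes 1\cdot 1$ lies entirely in the first summand $S$ (it is just $\alpha$), while $1 \otimes \alpha = 1 \otimes(a\cdot 1 + \beta) = a\cdot 1 + (1 \otimes \beta)$, whose component in the second summand $S \otimes_R F$ is exactly $1 \otimes \beta$. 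Hence $\sigma(\alpha) = \alpha \otimes 1 - 1 \otimes \alpha$ has second component $-(1\otimes \beta) \in S \otimes_R F$, and first component $\alpha - a\cdot 1 = \beta \in S$ (regarding $\beta\in F\subseteq S$).

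So $\sigma(\alpha) = 0$ forces both components to vanish. From the second component, $1 \otimes \beta = 0$ in $S \otimes_R F$; since $F$ is $R$-free and $1 \in R$ is a basis element of $S$, the map $F \to S \otimes_R F$, $f \mapsto 1 \otimes f$, is (split) injective — indeed it is the composite of the inclusion $F \hookrightarrow S$ with $-\otimes_R F$ applied to a split injection — so $\beta = 0$, whence $\alpha = a\cdot 1 \in \varphi(R)$. (The vanishing of the first component gives the same conclusion $\beta = 0$ even more directly.) This proves $\Ker\sigma = \varphi(R)$, and then $\varphi(R)$ being strictly closed in $S$ follows since it is precisely the kernel of the map $\sigma$ defining the strict closure of $\varphi(R)$ in $S$. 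The only point demanding any care — and the mild obstacle — is tracking the $S \otimes_R S = S \oplus (S\otimes_R F)$ identification consistently on both $\alpha\otimes 1$ and $1\otimes\alpha$; once the bookkeeping is set up correctly, everything else is formal.
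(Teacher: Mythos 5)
Your proof is correct and takes essentially the same approach as the paper: both exploit the free decomposition of $S$ over $R$ with $1$ as a basis element and compare components in the resulting direct-sum decomposition of $S\otimes_R S$. The only cosmetic difference is that you decompose $S\otimes_R S$ only along the right-hand factor (as $S\oplus(S\otimes_R F)$) and observe that the $S\otimes_R R$ component alone already forces $\alpha\in\varphi(R)$, whereas the paper uses the full two-sided decomposition $\bigoplus_{i,j}R(e_i\otimes e_j)$; both are correct bookkeepings of the same idea.
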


\begin{proof}
Notice that $\varphi$ is injective. Let $\{e_i\}_{i \in \Lambda}$ be an $R$-free basis of $S$ such that $e_{i_0} = 1$ for some $i_0 \in \Lambda$. We then have $S = \sum_{i \in \Lambda}Re_i = \bigoplus_{i \in \Lambda}Re_i$, so that
$$
S \otimes_R S = \sum_{i, j \in \Lambda}R(e_i \otimes e_j) = \bigoplus_{i, j \in \Lambda}Re_i \otimes_R Re_j.
$$
Let $\alpha \in \Ker \sigma$ and write $\alpha = (\alpha_i)_{i \in \Lambda} = \sum_{i \in \Lambda}\alpha_ie_i$, where $\alpha_i \in R$ for all $i \in \Lambda$. Then 
$$
\alpha \otimes 1 = \sum_{i \in \Lambda}\alpha_i(e_i \otimes 1)  \ \ \text{and} \ \ 1 \otimes \alpha =  \sum_{i \in \Lambda}\alpha_i(1 \otimes e_i)
$$
in $S \otimes_R S$. Thus, $\alpha \otimes 1 = 1 \otimes \alpha$ if and only if $\alpha_i=0$ for all $i \in \Lambda$ such that $i\neq i_0$. Therefore $\Ker \sigma \subseteq \Im \varphi$, and hence $\Ker \sigma = \Im \varphi$. This completes the proof.
\end{proof}

The next gives an upper bound for strict closures of rings.

\begin{prop}\label{2.2}
Let $S/R$ be an extension of commutative rings. Then 
$$
R^* \subseteq R + MS \ \ \text{in} \ \ S
$$
for every $M \in \Max R$.
\end{prop}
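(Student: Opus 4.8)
My plan is to reduce modulo $M$ and apply Lemma \ref{2.1} over the residue field $k=R/M$. First I would dispose of the trivial case: if $MS=S$, then $R+MS=S\supseteq R^*$ and there is nothing to prove, so I may assume $MS\neq S$ and set $\overline{S}=S/MS$.

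Next I would note that, since $MS\neq S$, the ring $\overline{S}$ is a nonzero vector space over the field $k=R/M$, and since $1\notin MS$ its unit $\overline{1}$ is a nonzero vector, hence part of a $k$-basis of $\overline{S}$. Thus Lemma \ref{2.1} applies to the structure homomorphism $k\to\overline{S}$ and shows that $k$ is strictly closed in $\overline{S}$; explicitly, any $\xi\in\overline{S}$ with $\xi\otimes 1=1\otimes\xi$ in $\overline{S}\otimes_k\overline{S}$ lies in the image of $k$, equivalently of $R$, in $\overline{S}$.

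Then, given $\alpha\in R^*$, so that $\alpha\otimes 1=1\otimes\alpha$ in $S\otimes_R S$, I would apply $-\otimes_R k$ and use the canonical isomorphism $(S\otimes_R S)\otimes_R k\cong(S\otimes_R k)\otimes_k(S\otimes_R k)=\overline{S}\otimes_k\overline{S}$ to deduce $\overline{\alpha}\otimes 1=1\otimes\overline{\alpha}$ in $\overline{S}\otimes_k\overline{S}$, where $\overline{\alpha}$ denotes the image of $\alpha$ in $\overline{S}$. By the previous step, $\overline{\alpha}$ is the image of some $r\in R$, i.e. $\alpha-r\in MS$, so $\alpha\in R+MS$. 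Since $\alpha\in R^*$ and $M\in\Max R$ are arbitrary, this gives $R^*\subseteq R+MS$, as claimed.

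The step I expect to require the most care is the base-change identification $(S\otimes_R S)\otimes_R k\cong\overline{S}\otimes_k\overline{S}$ together with the verification that it carries $\alpha\otimes 1-1\otimes\alpha$ to $\overline{\alpha}\otimes 1-1\otimes\overline{\alpha}$; this is a routine computation on simple tensors, but it is precisely the bridge that transports the conclusion of Lemma \ref{2.1} from the extension $\overline{S}/k$ back to $S/R$. Everything else is formal.
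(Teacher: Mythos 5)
Your proof is correct and is essentially the same as the paper's: both reduce modulo $M$, identify $(S\otimes_R S)\otimes_R (R/M)$ with $(S/MS)\otimes_{R/M}(S/MS)$, and apply Lemma~\ref{2.1} to the structure map $R/M \to S/MS$ to pull the tensor condition back to a containment in $R+MS$. The only superficial difference is that you explicitly discard the case $MS=S$ and flag the base-change isomorphism as the delicate step, whereas the paper records the same argument via the commutative diagram of complexes.
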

 
\begin{proof}
Let $M \in \Max R$. We may assume that $MS \ne S$. By applying the functor $R/M \otimes_R (-)$ to the exact sequence $0  \to R^* \overset{i}{\to} S \overset{\sigma}{\to} S\otimes_R S$ of $R$-modules, we get a complex
$$
\xymatrix{
 R/M \otimes_R R^* \ar[r]^{1 \otimes i}  & R/M \otimes_R S \ar[rd]^{\sigma} \ar[r]^{1 \otimes \sigma} & R/M \otimes_R(S \otimes_R S) \ar[d]^{\cong}  \\
  & & (R/M \otimes_R S) \otimes_{R/M} (R/M \otimes_R S) 
}
$$
of $R/M$-vector spaces, where the vertical map is a canonical isomorphism.
Let $\alpha \in R^*$. We then have 
$1 \otimes \alpha \in \Ker (R/M \otimes_R S \overset{\sigma}{\to} (R/M \otimes_R S) \otimes_{R/M} (R/M \otimes_R S))$. By applying Lemma \ref{2.1} to the homomorphism $R/M \otimes_R R \overset{1 \otimes i}{\longrightarrow} R/M \otimes_R S$ of rings, we get
$$
1 \otimes \alpha = 1 \otimes a \ \ \ \text{in} \ \ R/M \otimes_R S
$$
for some $a \in R$. Therefore, $\alpha -a \in MS$, so that $R^* \subseteq R + MS$, as desired. 
\end{proof}

Hence we have the following.

\begin{cor}\label{2.3}
Let $S/R$ be an extension of commutative rings. Suppose that $R$ is a local ring $($not necessarily Noetherian$)$ with maximal ideal $\fkm$. Then
$$
R^* \subseteq R + \m S \ \ \text{in} \ \ S.
$$
In particular, $R$ is strictly closed in $S$, provided $\m S \subseteq R$. 
\end{cor}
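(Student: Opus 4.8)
The plan is to deduce this directly from Proposition \ref{2.2}. Since $R$ is local with maximal ideal $\m$, we have $\Max R = \{\m\}$, so applying Proposition \ref{2.2} with the single choice $M = \m$ immediately gives $R^* \subseteq R + \m S$ in $S$. This disposes of the first assertion, and note that no Noetherian hypothesis is needed since Proposition \ref{2.2} itself assumes none.

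For the ``in particular'' claim I would argue as follows. Assume $\m S \subseteq R$. Then $R + \m S = R$, and combining this equality with the inclusion just obtained yields $R^* \subseteq R$. On the other hand $R \subseteq R^*$ always holds, since $R^*$ is by definition an intermediate ring between $R$ and $S$. Hence $R = R^*$, which is precisely the statement that $R$ is strictly closed in $S$.

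I do not expect any genuine obstacle here: the corollary is a formal specialization of Proposition \ref{2.2} to the local case, the only point worth recording being that locality is exactly what makes $\Max R$ a singleton, so that the family of bounds $R^* \subseteq R + MS$ indexed by $M \in \Max R$ collapses to the single bound with $M = \m$. (The degenerate possibility $\m S = S$ causes no trouble, as then $R + \m S = S \supseteq R^*$ trivially.)
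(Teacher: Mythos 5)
Your proof is correct and is exactly the intended argument: the paper states Corollary~\ref{2.3} without proof precisely because it is the immediate specialization of Proposition~\ref{2.2} to the case $\Max R = \{\m\}$, followed by the trivial observation that $\m S \subseteq R$ forces $R + \m S = R$ and hence $R^* = R$. Nothing to add.
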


\begin{cor}
Let $(R, \m)$ be a Cohen-Macaulay local ring with $\dim R=1$, possessing a canonical module $\rmK_R$. Suppose that there exists an $R$-submodule $K$ of $\rmQ(R)$ such that $R \subseteq K \subseteq \overline{R}$ and $K \cong \rmK_R$ as an $R$-module. If $R$ is an almost Gorenstein ring in the sense of \cite{GMP}, then $R$ is strictly closed in $S$, where $S = R[K]$. 
\end{cor}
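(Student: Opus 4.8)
\end{cor}

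The plan is to reduce the assertion to Corollary~\ref{2.3} by proving that $\m S \subseteq R$, where $S = R[K]$ is viewed as a subring of $\rmQ(R)$ (note $S \subseteq \overline{R}$, since $K \subseteq \overline{R}$ and $\overline{R}$ is a ring). Because $R \subseteq K$, the fractional ideals $K^{n}$ ($n \ge 0$, with the convention $K^{0} = R$) form an ascending chain, and $S = R[K] = \bigcup_{n \ge 0} K^{n}$; consequently $\m S = \bigcup_{n \ge 0} \m K^{n}$, so it suffices to show $\m K^{n} \subseteq R$ for every $n \ge 0$. The only ingredient I would borrow from the almost Gorenstein hypothesis is the one-dimensional characterization in \cite{GMP}: since $K$ is a fractional canonical ideal with $R \subseteq K \subseteq \overline{R}$ and $R$ is almost Gorenstein, one has $\m K \subseteq R$ (equivalently $\m(K/R) = 0$).

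Next I would introduce the fractional ideal $L := R :_{\rmQ(R)} \m = \{q \in \rmQ(R) \mid q\m \subseteq R\}$, which contains $R$; the relation $\m K \subseteq R$ says exactly that $K \subseteq L$. Now $\m L$ is an ideal of $R$ with $\m \subseteq \m L \subseteq R$ (the first inclusion because $1 \in L$, the second by definition of $L$), so maximality of $\m$ forces $\m L = \m$ or $\m L = R$. If $\m L = R$, then $\m$ is an invertible fractional ideal, hence generated by a single nonzerodivisor; as $\dim R = 1$ this makes $R$ a discrete valuation ring, so $\overline{R} = R$, whence $K = R$, $S = R$, and $\m S = \m \subseteq R$. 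If $\m L = \m$, an immediate induction yields $\m L^{n} = (\m L)L^{n-1} = \m L^{n-1} = \cdots = \m$ for all $n \ge 1$, and since $K^{n} \subseteq L^{n}$ we get $\m K^{n} \subseteq \m L^{n} = \m \subseteq R$ for all $n \ge 0$. In either case $\m S \subseteq R$, and Corollary~\ref{2.3} then gives that $R$ is strictly closed in $S$.

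The one step that is not purely formal is the passage from the first-power relation $\m K \subseteq R$ to $\m K^{n} \subseteq R$ for all $n$; this is precisely what the auxiliary ideal $L = R :_{\rmQ(R)} \m$ handles, via the dichotomy $\m L \in \{\m, R\}$ together with the fact that $\m(R:\m) = \m$ once $R$ is not a discrete valuation ring. I expect the remaining care to be routine: pinning down the exact reference in \cite{GMP} for the equivalence between almost Gorensteinness and $\m K \subseteq R$ in dimension one, and recording the standard facts that an invertible ideal in a Noetherian local ring is principal and that $R[K] = \bigcup_{n \ge 0} K^{n}$ when $R \subseteq K$.
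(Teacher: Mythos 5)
Your proof is correct and follows the same reduction as the paper: both deduce the claim from Corollary~\ref{2.3} by establishing $\m S \subseteq R$. The difference is in how that inclusion is obtained. The paper simply cites \cite[Theorem~3.11]{GMP}, which in the one-dimensional setting gives $\m S \subseteq R$ (indeed $\m S = \m$) for almost Gorenstein rings. You instead re-derive this from the more primitive characterization $\m K \subseteq R$, via the fractional ideal $L = R : \m$: since $\m \subseteq \m L \subseteq R$, maximality forces $\m L \in \{\m, R\}$; the case $\m L = R$ makes $\m$ invertible hence principal, so $R$ is a DVR and everything is trivial; otherwise $\m L^n = \m$ for all $n$, and $K \subseteq L$ yields $\m K^n \subseteq \m \subseteq R$, hence $\m S = \bigcup_n \m K^n \subseteq R$. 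This is a valid and self-contained argument --- essentially the classical fact that $\m(R:\m) = \m$ unless $R$ is regular --- so you have opened the black box that the paper leaves closed, with the same endpoint. The only stylistic remark is that one could also dispatch the DVR case by noting that a DVR is Gorenstein, so $K \cong R$ forces $K = R$ and $S = R$, but your route through invertibility is equally fine.
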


\begin{proof}
This follows from Corollary \ref{2.3} and \cite[Theorem 3.11]{GMP}.
\end{proof}

Strict closedness depends on its extension of rings as we show below.  

\begin{ex}
Let $V = k[[t]]$ be the formal power series ring over a field $k$. We set $R = k[[t^3, t^8, t^{13}]]$ and $S=k[[t^3, t^5]]$. Then $S = R + Rt^5 + Rt^{10}$, so that $R$ is strictly closed in $S$, because $(t^3, t^8, t^{13}) S = t^3 S \subseteq R$. However, since $R$ is not an Arf ring, $R$ is not strictly closed in $V$; see \cite[Proposition 4.5]{L}.
\end{ex}

For an extension of commutative rings $S/R$, if $S$ is finitely presented as an $R$-module, then the strict closure $R^*$ in $S$ has a better upper bound than Corollary 2.3.

\begin{thm}\label{2.6}
Let $S/R$ be an extension of commutative rings. Suppose that $S$ is a module-finite extension of $R$ that is generated by $\{1, f_1, \ldots, f_n\}$, where $n>0$ and $f_i \in S$ for each $1 \le i \le n$. Moreover, we assume that $S$ has a presentation 
$$
R^{\oplus q} \overset{\Bbb M}{\longrightarrow} R^{\oplus (n+1)} \overset{\varepsilon}{\longrightarrow} S \longrightarrow 0
$$
of $R$-modules, where $q > 0$ and $\varepsilon = \left[1 \ f_1 \ \cdots \ f_n\right]$. Then 
$$
R^* \subseteq R + J  \ \ \text{in} \ \ S
$$
where $J$ denotes an ideal of $S$ generated by all the entries of the first row of $\Bbb M$. 
\end{thm}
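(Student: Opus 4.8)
The plan is to feed the given finite presentation of $S$ directly into the exact sequence defining the strict closure and then to extract the single coordinate that carries the relevant information, namely the one corresponding to the generator $1\in S$.

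First I would apply the right exact functor $-\otimes_R S$ to the presentation $R^{\oplus q}\overset{\mathbb{M}}{\longrightarrow}R^{\oplus(n+1)}\overset{\varepsilon}{\longrightarrow}S\to 0$. Using the canonical identifications $R^{\oplus(n+1)}\otimes_R S\cong S^{\oplus(n+1)}$ and $R^{\oplus q}\otimes_R S\cong S^{\oplus q}$, this produces an exact sequence of $S$-modules
$$
S^{\oplus q}\overset{\mathbb{M}}{\longrightarrow}S^{\oplus(n+1)}\overset{\varepsilon\otimes 1}{\longrightarrow}S\otimes_R S\to 0,
$$
in which $\mathbb{M}$ is now regarded as a matrix over $S$, the module $S\otimes_R S$ carries the $S$-module structure coming from its second (right-hand) tensor factor, and $(\varepsilon\otimes 1)(e_i)=f_i\otimes 1$ for the standard basis $e_0,\ldots,e_n$ of $S^{\oplus(n+1)}$, with the convention $f_0=1$. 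In particular, exactness at the middle term identifies $\Ker(\varepsilon\otimes 1)$ with the $S$-submodule of $S^{\oplus(n+1)}$ generated by the columns of $\mathbb{M}$.

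Next I would take $\alpha\in R^*$ and write $\alpha=\sum_{i=0}^{n}a_if_i$ with $a_i\in R$, which is possible since $\{1,f_1,\ldots,f_n\}$ generates $S$ over $R$. Because each $a_i$ lies in $R$, the image of the vector $(a_0,a_1,\ldots,a_n)\in S^{\oplus(n+1)}$ under $\varepsilon\otimes 1$ is $\sum_i(a_if_i)\otimes 1=\alpha\otimes 1$, while the image of $(\alpha,0,\ldots,0)$ is $\alpha\cdot(1\otimes 1)=1\otimes\alpha$. Hence the condition $\alpha\otimes 1=1\otimes\alpha$ defining $R^*$ says that
$$
(a_0-\alpha,\,a_1,\,\ldots,\,a_n)\in\Ker(\varepsilon\otimes 1),
$$
so there is a column vector $c=(c_1,\ldots,c_q)^{\mathrm T}\in S^{\oplus q}$ with $\mathbb{M}c=(a_0-\alpha,a_1,\ldots,a_n)^{\mathrm T}$. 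Comparing the $0$-th coordinates yields $\sum_{j=1}^{q}m_{0j}c_j=a_0-\alpha$, where $m_{01},\ldots,m_{0q}$ are the entries of the first row of $\mathbb{M}$. Therefore $\alpha=a_0-\sum_{j=1}^{q}m_{0j}c_j\in R+J$, since $a_0\in R$ and $J=(m_{01},\ldots,m_{0q})S$; as $\alpha\in R^*$ was arbitrary, this gives $R^*\subseteq R+J$.

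I do not expect a genuine obstacle: once the presentation is tensored up correctly the argument is essentially a one-line diagram chase. The only points requiring care are bookkeeping ones — choosing the second-factor $S$-module structure on $S\otimes_R S$ so that $\varepsilon\otimes 1$ is $S$-linear with $e_i\mapsto f_i\otimes 1$, and using that $1$ is placed as the $0$-th generator of $S$, which is exactly why the first row of $\mathbb{M}$ (i.e.\ the data defining $J$) is the row that governs membership of $\alpha$ in $R+J$. The statement may be seen as a presentation-matrix refinement of the inclusions in Proposition~\ref{2.2} and Corollary~\ref{2.3}.
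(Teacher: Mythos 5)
Your proof is correct and takes essentially the same approach as the paper: both tensor the given presentation with $S$ (you use $-\otimes_R S$, the paper uses $S\otimes_R-$), identify the kernel of the induced map to $S\otimes_R S$ with the image of $\mathbb{M}$ over $S$, and read off the first coordinate. The only cosmetic difference is that the paper isolates $\beta=\alpha-\alpha_0$ before the diagram chase, whereas you carry $\alpha$ itself and conclude $a_0-\alpha\in J$ at the end.
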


\begin{proof}
By applying the functor $S \otimes_R(-)$ to the presentation of $S$, we have the following diagram 
$$
\xymatrix{
S \otimes_R R^{\oplus q} \ar[r]^{S \otimes {\Bbb M} \ \ \ }  & S \otimes_R R^{\oplus (n+1)}  \ar[r]^{\ \ \ S \otimes \varepsilon} & S \otimes_R S \ar[r] & 0 \\
S^{\oplus q} \ar[u]_{\cong} \ar[r]^{\Bbb M \ \ }& S^{\oplus (n+1)}\ar[u]_{\cong} & 
}
$$
of $S$-modules, where the vertical maps are canonical. 
Let $\alpha \in R^*$, where $R^*$ denotes the strict closure of $R$ in $S$, and write 
$$
\alpha = \alpha_0 + \alpha_1f_1 + \cdots + \alpha_n f_n
$$
for some $\alpha_i \in R$. Set $\beta = \alpha_1f_1 + \cdots + \alpha_n f_n$. Then, since $R \subseteq R^*$, we have $\beta \in R^*$. Therefore
$$
\beta \otimes 1 - 1 \otimes \beta = \beta \otimes 1 - \sum_{i = 1}^n \alpha_i (1 \otimes f_i) = 0
$$
in $S \otimes_R S$. By setting $\{e_i\}_{0 \le i \le n}$ the standard basis of $R^{\oplus (n+1)}$, we obtain 
$$
\beta \otimes e_0 - \sum_{i = 1}^n \alpha_i (1 \otimes e_i) \in \Ker (S \otimes \varepsilon)
$$
which yields that 
$$
\left(\begin{smallmatrix}
\beta \\
-\alpha_1 \\
\vdots \\
- \alpha_n
\end{smallmatrix}\right) \in \Im (S^{\oplus q} \overset{\Bbb M}{\longrightarrow} S^{\oplus (n+1)}).
$$
This implies $\beta \in J$, the ideal of $S$ generated by all the entries of the first row of $\Bbb M$. Hence $\alpha = \alpha_0 + \beta \in R + J$, as desired. 
\end{proof}

\begin{cor}\label{2.7}
We maintain the notation as in Theorem \ref{2.6}. If $f_i f_j \in R$ for each $1 \le i, j \le n$, then $R$ is strictly closed in $S$.
\end{cor}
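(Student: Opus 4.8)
The plan is to derive Corollary \ref{2.7} from Theorem \ref{2.6} by showing that, under the additional hypothesis $f_if_j\in R$, the ideal $J$ produced there is already contained in $R$. Recall that Theorem \ref{2.6} gives $R^*\subseteq R+J$ in $S$, where $J$ is the ideal of $S$ generated by the entries $m_{0,1},\ldots,m_{0,q}$ of the first row of $\Bbb M$. Since $R\subseteq R^*$ always, it will be enough to prove $J\subseteq R$: then $R^*\subseteq R+J=R$, forcing $R^*=R$, i.e.\ $R$ is strictly closed in $S$.

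To get $J\subseteq R$, I would unwind what the first‑row entries of $\Bbb M$ mean. For each $1\le k\le q$, the $k$‑th column $(m_{0,k},m_{1,k},\ldots,m_{n,k})^{\mathrm T}\in R^{\oplus(n+1)}$ lies in $\Ker\varepsilon$, so applying $\varepsilon=[1\ f_1\ \cdots\ f_n]$ yields the relation $m_{0,k}+\sum_{i=1}^n m_{i,k}f_i=0$ in $S$; that is, $m_{0,k}=-\sum_{i=1}^n m_{i,k}f_i$ with all $m_{i,k}\in R$. Multiplying this identity by a generator $f_j$ and invoking the hypothesis $f_if_j\in R$ gives $f_jm_{0,k}=-\sum_{i=1}^n m_{i,k}(f_if_j)\in R$. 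Combining this with $m_{0,k}\in R$ and the fact that $S=R+\sum_{j=1}^n Rf_j$ is generated over $R$ by $1,f_1,\ldots,f_n$, one concludes $S\cdot m_{0,k}\subseteq R$ for every $k$, and hence $J=\sum_{k=1}^q S\,m_{0,k}\subseteq R$, as required.

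There is essentially no serious obstacle once Theorem \ref{2.6} is available; the verification is a short computation whose only input is the hypothesis. The one subtlety to keep an eye on is that $J$ is an ideal of $S$, not merely of $R$, so it is not enough to observe $m_{0,k}\in R$ — one genuinely needs $f_jm_{0,k}\in R$, and this is precisely the step where $f_if_j\in R$ is used. (Alternatively, one could argue directly: for $\alpha=\alpha_0+\sum_i\alpha_i f_i\in R^*$ with $\alpha_i\in R$, put $\beta=\sum_i\alpha_i f_i\in R^*$ and manipulate $\beta\otimes 1=\sum_i\alpha_i(1\otimes f_i)$ in $S\otimes_R S$; but routing through Theorem \ref{2.6} is cleaner and shorter.)
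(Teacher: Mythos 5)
Your proof is correct and follows essentially the same route as the paper's: both show that every first-row entry $m_{0,k}$ of $\Bbb M$ lies in the conductor $R:S$ (since $f_j m_{0,k}=-\sum_i m_{i,k}(f_if_j)\in R$), so that the $S$-ideal $J$ is contained in $R$ and Theorem~\ref{2.6} gives $R^*\subseteq R+J\subseteq R$. The paper phrases it via a general $\alpha\in\Ker\varepsilon$ and the identity $R:S=\bigcap_i[R:f_i]$, while you work column by column, but these are the same computation.
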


\begin{proof}
Since $S = R + \sum_{i=1}^n Rf_i$, we have $R:S = \bigcap_{i=1}^n[R:f_i]$. Let $\alpha \in \Ker \varepsilon$ and write $\alpha = \sum_{i=0}^n\alpha_ie_i$, where $\{e_i\}_{0 \le i \le n}$ denotes the standard basis of $R^{\oplus (n+1)}$ and $\alpha_i \in R$ for all $0 \le i \le n$. We then have $\alpha_0 = - \sum_{i=1}^n\alpha_if_i$, whence
$$
\alpha_0 f_j = - \sum_{i=1}^n(\alpha_if_i)f_j  = - \sum_{i=1}^n\alpha_i(f_if_j) \in R 
$$
for all $1 \le j \le n$. Thus, $\alpha_0 \in R:S$. Hence $J \subseteq R:S \subseteq R$, so that $R^* \subseteq R + J \subseteq R$. 
\end{proof}

We now explore some examples in order to illustrate Theorem \ref{2.6}.

\begin{ex}
Let $S=k[X, Y]$ be the polynomial ring over a field $k$. Let $n \ge 6$ be an integer and set
$$
R = k[X^{n-i}Y^i \mid 0 \le i \le n, \ i \ne 1, 3]
$$
in $S$. Then $R$ is a strictly closed Cohen-Macaulay ring with $\dim R=2$.
\end{ex}

\begin{proof}
Notice that $\overline{R}= k[X^{n-i}Y^i \mid 0 \le i \le n] = R + R X^{n-1}Y + RX^{n-3}Y^3$. By Corollary \ref{2.7}, $R$ is strictly closed in $\overline{R}$, because $(X^{n-1}Y)^2, (X^{n-1}Y)(X^{n-3}Y^3), (X^{n-3}Y^3)^2 \in R$. 
Moreover, since $X^n,Y^n$ forms a regular sequence on $R$, the ring $R$ is Cohen-Macaulay and of dimension $2$. 
\end{proof}


\begin{ex}
Let $S=k[X, Y]$ be the polynomial ring over a field $k$. Let  $\ell \ge 3$ be an integer and set 
$$
R = k[X, Y]/(Y^{\ell}).
$$
We denote by $x, y$ the images of $X, Y$ in $R$, respectively. Note that $R$ is a finitely generated free $D$-module with a $D$-free basis $\{1, y, y^2, \ldots, y^{\ell -1}\}$, where $D = k[x]$.  

For each $n > 0$, we define
$$
z_n = \frac{y}{x^n} \in \rmQ(R) \ \ \ \text{and} \ \ \ S_n = D[z_n].
$$ 
Then, $S_n$ is an intermediate ring between $R$ and $\overline{R}$, and 
$$
R^* = R + D\frac{y^2}{x^n} + D\frac{y^3}{x^{2n}} + \cdots + D\frac{y^{\ell -1}}{x^{(\ell -2)n}}  \ \ \text{in} \ \ S_n.
$$ 
\end{ex}

\begin{proof}
Notice that $x$ is transcendental over $k$. 
Let $K = \rmQ(D)$ be the total ring of fractions of $D$. Then $K \subseteq \rmQ(R)$, so that $\rmQ(R)=K[y]$ and $\{y^i\}_{0 \le i \le \ell-1}$ forms a $K$-free basis of $\rmQ(R)$. We consider the $K$-algebra map
$\varphi: K[y] \to K$ such that $\varphi(y) = 0$. Let $f \in \overline{R}$ and write $f = \sum_{i=0}^{\ell-1} c_i y^i$ with $c_i \in K$. Since $f \in \overline{R}$, we then have $c_0 = \varphi(f) \in \overline{D} = D$. Hence
$$
\overline{R} = D + K y + K y^2 + \cdots + K y^{\ell -1}.
$$
For each $n > 0$, we set
$$
z_n = \frac{y}{x^n} \in \rmQ(R) \ \ \ \text{and} \ \ \ S_n = D[z_n] = D + Dz_n + \cdots + Dz_n^{\ell-1}.
$$ 
The relation $x^n z_n = y$ and $z_n \in Ky$ induce $Dy \subseteq Dz_n \subseteq Ky$, so that $R \subseteq S_n \subseteq \overline{R}$. We consider the $R$-linear map $\varepsilon: R^{\oplus \ell} \to S_n$ defined by 
$$
\varepsilon\left(\begin{smallmatrix}
a_0 \\
a_1 \\
\vdots \\
a_{\ell -1}
\end{smallmatrix}\right) =  \sum_{i=0}^{\ell -1} a_iz^i = a_0 + a_1 z + \cdots + a_{\ell -1 }z^{\ell -1}.
$$
Then, it is straightforward to check that 
$$
\Ker \varepsilon = \left<
\left(\begin{smallmatrix}
y \\
-x^n \\
0 \\
0 \\
\vdots \\
0
\end{smallmatrix}\right), 
\left(\begin{smallmatrix}
0 \\
y \\
-x^n \\
0 \\
\vdots \\
0
\end{smallmatrix}\right), 
\cdots, 
\left(\begin{smallmatrix}
0 \\
0 \\
\vdots \\
0 \\
y \\
-x^n
\end{smallmatrix}\right), 
\left(\begin{smallmatrix}
0 \\
0 \\
\vdots \\
0 \\
0 \\
y
\end{smallmatrix}\right)
 \right>
$$
as an $R$-module. Therefore, by Theorem \ref{2.6}, we have
$$
R^* \subseteq R + yS_n = R + D\frac{y^2}{x^n} + D\frac{y^3}{x^{2n}} + \cdots + D\frac{y^{\ell -1}}{x^{(\ell -2)n}}
$$
in $S_n$. On the other hand, we consider the following diagram
$$
\xymatrix{
S_n \otimes_R R^{\oplus \ell} \ar[r]^{S_n \otimes {\Bbb M}  }  & S_n \otimes_R R^{\oplus \ell}  \ar[r]^{S_n \otimes \varepsilon} & S_n \otimes_R S_n \ar[r] & 0 \\
S_n^{\oplus \ell} \ar[u]_{\cong} \ar[r]^{\Bbb M \ \ }& S_n^{\oplus \ell}\ar[u]_{\cong} & 
}
$$
of $S_n$-modules, where the vertical maps are canonical and the matrix $\Bbb M$ is determined by the generators of $\Ker \varepsilon$. Because of the following relations 
\begin{eqnarray*}
\left(\begin{smallmatrix}
\frac{y^2}{x^n} \\
0 \\
-x^n \\
0 \\
\vdots \\
0
\end{smallmatrix}\right) 
&=&
\frac{y}{x^n} 
\left(\begin{smallmatrix}
y \\
-x^n \\
0 \\
0 \\
\vdots \\
0
\end{smallmatrix}\right)
+ \left(\begin{smallmatrix}
0 \\
y \\
-x^n \\
0 \\
\vdots \\
0
\end{smallmatrix}\right) \\
\left(\begin{smallmatrix}
\frac{y^3}{x^{2n}} \\
0 \\
0 \\
-x^n \\
\vdots \\
0
\end{smallmatrix}\right) 
&=&
\left(\frac{y}{x^n} \right)^2
\left(\begin{smallmatrix}
y \\
-x^n \\
0 \\
0 \\
\vdots \\
0
\end{smallmatrix}\right)
+ 
\frac{y}{x^n} 
\left(\begin{smallmatrix}
0 \\
y \\
-x^n \\
0 \\
\vdots \\
0
\end{smallmatrix}\right)
+
\left(\begin{smallmatrix}
0 \\
0 \\
y \\
-x^n \\
\vdots \\
0
\end{smallmatrix}\right) \\
&\vdots& \\
\left(\begin{smallmatrix}
\frac{y^{\ell-1}}{x^{(\ell-2)n}} \\
0 \\
0 \\
0 \\
\vdots \\
-x^n
\end{smallmatrix}\right) 
&=&
\left(\frac{y}{x^n} \right)^{\ell -2}
\left(\begin{smallmatrix}
y \\
-x^n \\
0 \\
0 \\
\vdots \\
0
\end{smallmatrix}\right)
+ 
\left(\frac{y}{x^n} \right)^{\ell -3}
\left(\begin{smallmatrix}
0 \\
y \\
-x^n \\
0 \\
\vdots \\
0
\end{smallmatrix}\right)
+ \cdots + 
\frac{y}{x^n}
\left(\begin{smallmatrix}
0 \\
0 \\
\vdots \\
y \\
-x^n \\
0
\end{smallmatrix}\right)
+
\left(\begin{smallmatrix}
0 \\
0 \\
\vdots \\
0 \\
y \\
-x^n 
\end{smallmatrix}\right),
\end{eqnarray*}
we then have 
$$
\left(\frac{y^i}{x^{(i-1)n}}\right) \otimes 1 = 1 \otimes \left(\frac{y^i}{x^{(i-1)n}}\right) \ \ \ \text{for every} \ \ 2 \le i \le \ell-1
$$
in $S_n \otimes_R S_n$. Hence we get 
$$
R^*  = R + D\frac{y^2}{x^n} + D\frac{y^3}{x^{2n}} + \cdots + D\frac{y^{\ell -1}}{x^{(\ell -2)n}} \ \ \text{in} \ \ S_n
$$
as claimed. 
\end{proof}


\section{Flat base change of strictly closed rings}

In this section, we explore the question of how the strict closedness is inherited under flat homomorphisms. 
Let $A/R$ be an extension of commutative rings, and let $\varphi: R \to S$ be a flat homomorphism. With this notation, J. Lipman proved in \cite[Corollary 4.4]{L} that the strict closure $[S\otimes_RR]^*$ of $S\otimes_RR$ in $S \otimes_R A$ coincides with $S \otimes_R R^*$, where $R^*$ denotes the strict closure of $R$ in $A$. 
Therefore, for a faithfully flat homomorphism $\varphi: R \to S$, if $S$ is strictly closed, then so is $R$.


The goal of this section is to prove the following. 

\begin{thm}\label{3.1}
Let $\varphi: R \to S$ be a faithfully flat homomorphism of Noetherian rings. For each homomorphism $R \to R'$ of rings such that $R'$ is a finitely generated $R$-module, we assume  $[S \otimes_R R']_P/\p[S \otimes_R R']_P$ is regular for every $P \in \Spec (S \otimes_R R')$, where $\p = P \cap R'$. Suppose that $R$ is reduced and $\overline{R}$ is a finitely generated $R$-module. Then the following assertions hold true, where $\beta: S \otimes_R\rmQ(R) \to \rmQ(S)$ denotes the canonical injection. 
\begin{enumerate}[$(1)$]
\item $S$ is reduced. 
\item $\beta(S \otimes_R \overline{R}) = \overline{S}$ in $\rmQ(S)$. 
\item $\beta(S \otimes_RR^*) = S^*$ in $\rmQ(S)$. 
\item $R$ is strictly closed if and only if $S$ is strictly closed. 
\end{enumerate}
\end{thm}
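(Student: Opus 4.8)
The plan is to establish the four assertions in order, with (1)--(3) feeding into (4). Throughout, write $Q = \operatorname{Q}(R)$ and note that since $\varphi$ is flat, $S \otimes_R Q$ is the localization of $S$ at the multiplicative set of $R$-regular elements, hence maps into $\operatorname{Q}(S)$; call this map $\beta$ and observe it is injective because a nonzerodivisor of $R$ stays a nonzerodivisor in $S$ (flatness), so $S \hookrightarrow S\otimes_R Q \hookrightarrow \operatorname{Q}(S)$. For (1), I would use the hypothesis applied to $R' = R$ itself: the fibers $[S\otimes_R R]_P / \mathfrak p[S\otimes_R R]_P = S_P/\mathfrak p S_P$ are regular, hence reduced; combined with $R$ reduced and flatness, the standard ``$R$ reduced $+$ reduced fibers $\Rightarrow S$ reduced'' criterion (e.g.\ via $(R_0)+(S_1)$ or \cite[Tag 030V]{stacks}) gives that $S$ is reduced.

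For (2), apply the fiber hypothesis to $R' = \overline{R}$, which is a finitely generated $R$-module by assumption. Then $S \otimes_R \overline{R}$ has all fibers over $\operatorname{Spec}\overline R$ regular (in particular reduced and normal), and $\overline R$ is a normal ring (a finite product of normal domains), so by the standard ascent of normality under flat maps with regular — hence normal — fibers, $S\otimes_R\overline R$ is normal. It is module-finite over $S$ (base change of a module-finite extension) and contained in $\operatorname{Q}(S)$ after applying $\beta$ (tensoring $\overline R \subseteq Q$ by the flat $S$), and it contains $S$; a normal ring module-finite over $S$ and sitting inside the total quotient ring is exactly $\overline S$. This gives $\beta(S\otimes_R\overline R)=\overline S$.

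For (3), I would invoke Lipman's flat base change for strict closure, \cite[Corollary 4.4]{L} as recalled at the start of this section, with $A = \overline R$: it yields $[S\otimes_R R]^* = S\otimes_R R^*$ computed as the strict closure inside $S\otimes_R\overline R$. Since $S\otimes_R R = S$ and, by (2), $\beta$ identifies $S\otimes_R\overline R$ with $\overline S$, the strict closure of $S$ in $\overline S$ is $\beta(S\otimes_R R^*)$; that is, $S^* = \beta(S\otimes_R R^*)$. Finally, (4): if $R$ is strictly closed, then $R = R^*$, so $S = S\otimes_R R = S\otimes_R R^* \cong S^*$ via $\beta$, hence $S$ is strictly closed. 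Conversely, if $S$ is strictly closed, then $\beta(S\otimes_R R^*) = S^* = S = \beta(S\otimes_R R)$, so $S\otimes_R R^* = S\otimes_R R$ inside $S\otimes_R\overline R$; the inclusion $R \hookrightarrow R^*$ of $R$-modules becomes an isomorphism after applying the faithfully flat functor $S\otimes_R(-)$, hence was an isomorphism to begin with, so $R = R^*$.

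The main obstacle I anticipate is step (2): one must be careful that the fiber hypothesis as stated ($[S\otimes_R R']_P/\mathfrak p[S\otimes_R R']_P$ regular for the contraction $\mathfrak p = P\cap R'$) is exactly what is needed to run the flat-descent-of-normality argument for the specific finite $R$-algebra $R' = \overline R$, and to correctly identify the total quotient ring of $S\otimes_R\overline R$ with that of $S$ — i.e.\ that no new zerodivisors or new components appear — which is where reducedness from (1) and the module-finiteness are both essential. Steps (1), (3), (4) are then largely formal given the machinery recalled in the section.
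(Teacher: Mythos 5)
Your proposal is correct and follows essentially the same approach as the paper: the paper packages the ascent of $(S_n)+(R_{n-1})$ under flat maps with regular fibers as a preliminary Lemma (its Lemma 3.2), applies it with $n=1$ for (1) and with $n=2$ for (2) via the base change $\overline{R}\to S\otimes_R\overline{R}$, and for (3) re-derives Lipman's flat base change for strict closure by tensoring the defining exact sequence $0\to R^*\to\overline{R}\to\overline{R}\otimes_R\overline{R}$ with the flat module $S$ rather than citing \cite[Corollary 4.4]{L}. The substance is identical to what you describe, including the final faithfully-flat descent argument for (4).
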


To prove Theorem \ref{3.1}, we need an auxiliary. For each integer $n$, a Noetherian ring $R$ satisfies {\it the condition $(S_n)$} (resp. $(R_n)$) {\it of Serre}, if $\depth R_P \ge \min \{n, \dim R_P\}$ for all $P \in \Spec R$ (resp. $R_P$ is a regular local ring for all $P \in \Spec R$ with $\dim R_P \le n$).

\begin{lem}\label{3.2}
Let $\varphi: R \to S$ be a flat homomorphism of Noetherian rings. Suppose that $S_P/\p S_P$ is regular for every $P \in \Spec S$, where $\p = P \cap R$. For each $n \ge 1$, if $R$ satisfies the conditions $(S_n)$ and $(R_{n-1})$ of Serre, then so is $S$.
\end{lem}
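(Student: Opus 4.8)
The statement to prove is Lemma \ref{3.2}: given a flat homomorphism $\varphi\colon R\to S$ of Noetherian rings with geometrically regular fibers (i.e.\ $S_P/\p S_P$ regular for all $P\in\Spec S$, $\p=P\cap R$), if $R$ satisfies $(S_n)$ and $(R_{n-1})$ then so does $S$. The plan is to invoke the standard transfer formulas for flat local homomorphisms: for $P\in\Spec S$ lying over $\p\in\Spec R$, one has
$$
\depth S_P = \depth R_{\p} + \depth\bigl(S_P/\p S_P\bigr)
\quad\text{and}\quad
\dim S_P = \dim R_{\p} + \dim\bigl(S_P/\p S_P\bigr),
$$
(the first is the depth formula for flat local maps, the second is the dimension formula for flat maps). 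Since the fiber $S_P/\p S_P$ is a regular local ring by hypothesis, it is Cohen--Macaulay, so its depth equals its dimension; write $d$ for this common value.

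**Checking $(S_n)$ for $S$.** Fix $P\in\Spec S$ and set $\p=P\cap R$. Using the two displayed formulas and the equality $\depth(S_P/\p S_P)=\dim(S_P/\p S_P)=d$, we get
$$
\depth S_P = \depth R_{\p} + d,\qquad \dim S_P = \dim R_{\p}+d.
$$
Because $R$ satisfies $(S_n)$ we have $\depth R_{\p}\ge\min\{n,\dim R_{\p}\}$, hence
$$
\depth S_P \ge \min\{n,\dim R_{\p}\} + d \ge \min\{n+d,\ \dim R_{\p}+d\} \ge \min\{n,\ \dim S_P\},
$$
the last inequality since $d\ge 0$. So $S$ satisfies $(S_n)$.

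**Checking $(R_{n-1})$ for $S$.** Now suppose $\dim S_P\le n-1$. From $\dim S_P=\dim R_{\p}+d$ we get $\dim R_{\p}\le n-1$, so by $(R_{n-1})$ for $R$ the local ring $R_{\p}$ is regular. Regularity ascends along a flat local homomorphism with regular closed fiber: since $R_{\p}$ is regular and the fiber $S_P/\p S_P$ is regular, $S_P$ is regular. (Concretely: a regular system of parameters of $R_{\p}$ pushed to $S_P$ together with lifts of a regular system of parameters of the fiber $S_P/\p S_P$ generate $PS_P$ and have the right cardinality $\dim S_P$.) Hence $S$ satisfies $(R_{n-1})$, completing the induction-free argument.

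**Where the difficulty lies.** There is essentially no obstacle here beyond correctly quoting the flat local base-change formulas for depth and dimension and the ascent of regularity; these are standard (Matsumura, \emph{Commutative Ring Theory}, Theorems~15.1, 23.3, 23.7). The one point requiring a moment's care is the reduction from an arbitrary prime $P\in\Spec S$ to the local situation — one must observe that $\varphi$ induces a flat local homomorphism $R_{\p}\to S_P$ whose closed fiber is exactly $S_P/\p S_P$, which is regular by hypothesis applied to that very $P$ — and then noting that a regular local ring is Cohen--Macaulay so that depth and dimension of the fiber coincide, which is what makes the two transfer formulas combine cleanly. The hypothesis ``$n\ge 1$'' is used only implicitly (for $n=0$, $(R_{-1})$ is vacuous and $(S_0)$ is automatic), so the argument in fact works for all $n\ge 0$.
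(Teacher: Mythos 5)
Your proof is correct. For the $(R_{n-1})$ part the two arguments coincide: use the dimension formula to pass from $\dim S_P\le n-1$ to $\dim R_\p\le n-1$, invoke $(R_{n-1})$ for $R$, then ascend regularity through the flat local map with regular fiber. For the $(S_n)$ part, however, you take a genuinely more direct route. The paper's argument reduces to the case $\depth S_P\le n-1$, uses the depth formula plus $(S_n)$ to force $\dim R_\p\le n-1$, then invokes $(R_{n-1})$ to conclude that $R_\p$ --- and hence $S_P$ --- is regular, so $\depth S_P=\dim S_P$; thus the paper's $(S_n)$ verification actually uses both Serre hypotheses on $R$. Your version applies the depth and dimension formulas simultaneously, sets $d=\depth(S_P/\p S_P)=\dim(S_P/\p S_P)$ (the fiber being regular, hence Cohen--Macaulay), and gets
\[
\depth S_P=\depth R_\p+d\ge\min\{n,\dim R_\p\}+d=\min\{n+d,\dim S_P\}\ge\min\{n,\dim S_P\}
\]
using only $(S_n)$ for $R$ and Cohen--Macaulayness of the fiber. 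This decouples the two conditions and in fact proves the slightly stronger statement that $(S_n)$ ascends along flat maps with Cohen--Macaulay fibers, at the cost of nothing. Both approaches are valid; yours is the more modular one.
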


\begin{proof}
Suppose that $R$ satisfies the conditions $(S_n)$ and $(R_{n-1})$ of Serre. For $P \in \Spec S$, we set $\p = P \cap R$. The map $\varphi: R \to S$ induces the flat local homomorphism $R_\p \to S_P$ of rings. Firstly we will show that $\depth S_P \ge \min \{n, \dim S_P\}$. Indeed, we may assume $\depth S_P \le n-1$. Since $\depth S_P = \depth R_\p + \depth S_P/\p S_P$, we have $\depth R_\p \le n-1$, so that $\dim R_\p \le n-1$. 
Therefore the local ring $S_P$ is regular, because $R$ satisfies $(R_{n-1})$ and $S_P/\p S_P$ is regular. Hence $\depth S_P \ge \min \{n, \dim S_P\}$. 
Secondly, let $P \in \Spec S$ such that $\dim S_P \le n-1$. We set $\p = P \cap R$.  Then $\dim R_\p \le n$, because the induced homomorphism $R_\p \to S_P$ is flat and local. Hence $S_P$ is regular. 
\end{proof}

We are now ready to prove Theorem \ref{3.1}

\begin{proof}[Proof of Theorem \ref{3.1}]
By Lemma \ref{3.2}, the ring $S$ is reduced. Let $C = S \otimes_R\overline{R}$. Look at the commutative diagram
$$
\xymatrix{
 S \otimes_R \rmQ(R) \ar[r]^{\ \ \beta}  & \rmQ(S)  \\
\hspace{-2em}C= S \otimes_R \overline{R} \ar[r]^{\ \ \beta}\ar[u]  & \overline{S}  \ar[u]& \\
 S \otimes_R R \ar[r]^{\ \ \cong}\ar[u] & S \ar[u] & 
}
$$
of $R$-algebras, where the vertical maps are canonical. For $P \in \Spec C$, we set $\p = P \cap \overline{R}$. 
Then our hypothesis guarantees that the ring $C_P/\p C_P$ is regular. By applying Lemma \ref{3.2} to the homomorphism $\overline{R} \to C=S \otimes_R \overline{R}$, we conclude that $C$ is normal. Hence the equality $\beta(S \otimes_R \overline{R}) = \overline{S}$ holds in $\rmQ(S)$. 
By taking the functor $S \otimes_R (-)$ to the exact sequence $0 \to R^* \to \overline{R} \overset{\sigma} \to \overline{R} \otimes_R\overline{R}$ of $R$-modules, we get the commutative diagram
$$
\xymatrix{
 0 \ar[r] & S \otimes_R R^* \ar[r] & C \ar[r]^{\sigma \ \ \ }  \ar[d]^{\cong} & C \otimes_SC \ar[d]^{\cong} \\
 0 \ar[r] & S^* \ar[r]  & D \ar[r]^{\sigma \ \ \ } & D \otimes_SD
 }
$$
of $S$-modules, where $D = \beta(S \otimes_R \overline{R})$ and $\sigma : \overline{R} \to \overline{R}\otimes_R\overline{R}$ is an $R$-linear map defined by $\sigma(\alpha)=\alpha \otimes 1 - 1 \otimes \alpha$. Therefore the equality $\beta(S \otimes_RR^*) = S^*$ holds in $\rmQ(S)$. As a consequence, $S$ is strictly closed if and only if $S \otimes_R R^* = S \otimes_R R$. The latter condition is equivalent to saying that $R$ is strictly closed. This completes the proof. 
\end{proof}

Closing this section, we explore one example. 

\begin{ex}
Let $F[t]$ be the polynomial ring over a field $F$ of characteristic $3$. We set $K = \rmQ(F[t])$ and $k = \rmQ(F[t^3])$. Then $K = k[t]$ and $\{1, t, t^2\}$ forms a $k$-basis of $K$. Let $V = K[X]$ be the polynomial ring over $K$ and consider 
$$
R = k[X, tX, t^2X]
$$
in $V$. Then $R$ is strictly closed, but $K \otimes_k R$ is not strictly closed.  
\end{ex}

 \begin{proof}
 Notice that $\rmQ(V) = \rmQ(R)$, $V=k[t, X]=\overline{R}$, and $V$ is a module-finite extension of $R$. We set $N=XV$ and $M=XV \cap R$. Then $N=M \in \Max R$.
 Hence, by Proposition \ref{2.2}, we get 
$$
R^* \subseteq R + MV = R + XV = R + M \subseteq R
$$
which yields that the equality $R^* = R$ holds in $V$. Hence $R$ is strictly closed. We now consider the extension
$$
S = K \otimes_k R \subseteq T=K \otimes_kV \subseteq K \otimes_k\rmQ(V) = K \otimes_k\rmQ(R)
$$
of rings. Since $S = K \otimes_k R$ is Artinian and the inclusion $S = K \otimes_k R \to K \otimes_k \rmQ(R)$ is flat, we obtain $\rmQ(S) = K \otimes_k\rmQ(V)$. 

Let $z = t\otimes 1 - 1 \otimes t \in T=K \otimes_kV$. We then have $z^3=0$ and $z \in \overline{S}$. Hence
$$
\overline{S} \supseteq S + z \cdot \rmQ(S).
$$
Moreover we have $1\otimes t \not\in S$, because $t \not\in R$. Let $P =MS$. Then $P \in \Max S$ and $P = (1 \otimes X, 1 \otimes tX, 1 \otimes t^2X)S$. Since $M^2=XM$, we get $P^2 = (1 \otimes X)P$. Therefore
$$
P :P = \frac{P}{1 \otimes X} = S[1\otimes X] = K\otimes_kV=T.
$$
By \cite[Corollary 9.2]{CCCEGIM}, if $S$ is strictly closed, then $T=P:P$ is a weakly Arf ring. To show that $S$ is not strictly closed, it is enough to check that $T$ is not weakly Arf. 

Notice that $T = (K \otimes_k K)[1 \otimes X]$ and $1 \otimes X$ is transcendental over $K \otimes_k K$. We denote by  $B=A[Z]$ the polynomial ring over $A = K \otimes_k K$. Let $k[Y]$ be the polynomial ring over $k$ and consider the $k$-algebra map
$
\varphi : k[Y] \to K
$
such that $\varphi(Y) = t$. Then, since $\Ker \varphi = (Y^3-t^3)$, we have an isomorphism
$$
k[Y]/(Y^3-t^3) \cong K
$$
of $k$-algebras. 
We now regard $A$ as a $K$-algebra via the map $K \to A=K\otimes_kK, \alpha \mapsto \alpha \otimes 1$. Therefore, we get the isomorphisms
\begin{eqnarray*}
A = K \otimes_k K &\cong& K \otimes_k\left[k[Y]/(Y^3-t^3)\right] \\ 
&\cong& (K \otimes_kk[Y])/((1\otimes Y - t\otimes 1)^3) \\
&\cong& (K[Y])/((Y - t)^3) 
\end{eqnarray*}
of $K$-algebras, where $K[Y]$ denotes the polynomial ring over $K$. Consequently 
$$
B \cong K[Z, Y]/ ((Y-t)^3)
$$
as a ring. This implies $B$ is not a weakly Arf ring. Hence $T$ is not weakly Arf. 
 \end{proof}


\section{Relation between weakly Arf closures}

In this section, we investigate a relation between strict and weakly Arf closures. 
Theorem \ref{1.2} claims that the equality $R^a = R^*$ holds if $R$ is a Noetherian ring such that $\height_R M\geq 2$ for every $M \in \Max R$, $\overline{R}$ is a module-finite extension of $R$, and $R^a$ satisfies the condition $(S_2)$ of Serre. This section aims at exploring two concrete examples in order to examine a possibility for generalization of Theorem \ref{1.2}. 

We begin with the construction of weakly Arf closure. Let $W(R)$ be the set of non-zerodivisors on a ring $R$. 

\begin{defn}[{\cite[Definition 7.4]{CCCEGIM}}]
For an arbitrary commutative ring $R$, we set 
$$
R_1 = R\left[\frac{yz}{x} ~\middle|~ x \in W(R), y, z \in A \ \text{such that} \ \frac{y}{x}, \frac{z}{x} \in \overline{R} \ \right] \subseteq \rmQ(R)
$$
which is an intermediate ring between $R$ and $\overline{R}$. For each $n \ge 0$, we define recursively
$$
R_n=
\begin{cases}
\ R & (n=0),\\
\ \left[R_{n-1}\right]_1 & (n >0).
\end{cases}
$$
Notice that, for each $n \ge 0$, $\rmQ(R_n) = \rmQ(R)$, $\overline{R_n} = \overline{R}$, and we have a tower
$$
R=R_0 \subseteq R_1 \subseteq \cdots \subseteq R_n \subseteq \cdots \subseteq \overline{R}
$$
of rings. We define 
$$
R^a = \bigcup_{n \ge 0} R_n
$$ and call it {\it the weakly Arf closure} of $R$.
\end{defn}

It is proved in \cite[Proposition 7.5]{CCCEGIM} that $R^a$ is a weakly Arf ring, and $R^a \subseteq S$ for every intermediate ring $R \subseteq S \subseteq \overline{R}$ such that $S$ is  weakly Arf. In particular, since $R^*$ is a weakly Arf ring, we obtain $R^a \subseteq R^*$ where $R^*$ denotes the strict closure of $R$ in $\overline{R}$ (\cite[Proposition 4.5]{L}, \cite[Theorem 4.5]{CCCEGIM}). 



\if0
Firstly, we introduce some basic properties of the weakly Arf closure.  We now assume that $R$ is a Noetherian ring satisfying $(S_1)$. 

\begin{lem}\label{4.2}
For every multiplicatively closed subset $S$ of $R$, one has
$\left[S^{-1}R\right]_1 = S^{-1}R_1$.  
\end{lem}

\begin{proof}
Because $R$ satisfies $(S_1)$, we can consider that $S^{-1}\left[ \rmQ(R)  \right]=\rmQ(S^{-1}R)$ and $S^{-1}\overline{R} =\overline{S^{-1}R}$. Let $\varphi : \rmQ(R) \to S^{-1}\left[ \rmQ(R)\right]=\rmQ(S^{-1}R)$ be the canonical homomorphism of rings. 

Firstly, let $x, y, z\in R$ such that $x\in W(R)$ and $\frac{y}{x}, \frac{z}{x}\in \overline{R}$. Then, $\frac{\varphi(y)}{\varphi(x)}=\varphi\left(\frac{y}{x}\right) \in S^{-1}\overline{R} =\overline{S^{-1}R}$ and $\frac{\varphi(z)}{\varphi(x)}\in \overline{S^{-1}R}$, so that $\varphi\left( \frac{yz}{x} \right)=\frac{\varphi(y)\varphi(z)}{\varphi(x)}\in [S^{-1}R]_1$, which implies $S^{-1}R_1\subseteq [S^{-1}R]_1$.

Next, let $\alpha, \beta, \gamma\in S^{-1}R$ such that $\alpha\in W(S^{-1}R)$ and $\frac{\beta}{\alpha}, \frac{\gamma}{\alpha}\in \overline{S^{-1}R}$. Without loss of generality, we may assume that 
$$\alpha=\varphi(x),\ \ \beta=\varphi(y),\ \ \gamma=\varphi(z)$$  
where  $x\in W(R)$ and $y, z\in R$ (see \cite[Proof of Theorem 2.6]{CCCEGIM}). Because $\frac{\beta}{\alpha}=\varphi\left( \frac{y}{x}\right)\in S^{-1}\overline{R}$ and $\frac{\gamma}{\alpha}\in S^{-1}\overline{R}$, we have  $\frac{ty}{x}, \frac{tz}{x} \in \overline{R}$ for some $t\in S$, so that $\frac{ty\cdot tz}{x}\in R_1$. Therefore,
\begin{eqnarray*}
\frac{\beta\gamma}{\alpha}=\alpha\cdot \frac{\beta}{\alpha}\cdot \frac{\gamma}{\alpha} &=& \varphi(x)\cdot \frac{\varphi(y)}{\varphi(x)}\cdot \frac{\varphi(z)}{\varphi(x)}\\
&=& \frac{1}{\varphi(t)^2}\cdot \frac{\varphi(t)\varphi(y)\cdot \varphi(t)\varphi(z)}{\varphi(x)}\\
&=&\frac{1}{\varphi(t)^2}\cdot \varphi\left( \frac{ty\cdot tz}{x} \right) \in S^{-1}R_1,
\end{eqnarray*}
which implies $[S^{-1}R]_1\subseteq S^{-1}R_1$, as desired.
\end{proof}
 
\begin{prop}\label{4.3}
Suppose that $\overline{R}$ is a finitely generated $R$-module. Then 
$$
\left[S^{-1}R\right]^a = S^{-1}R^a
$$
for every multiplicatively closed subset $S$ of $R$.
 \end{prop}

\begin{proof}
Because both $\overline{R}$ and $\overline{S^{-1}R}$ are module-finite extensions,  we can choose $n\ge 0$ so that $R^a=R_n$ and $[S^{-1}R]^a=[S^{-1}R]_n$. Hence we have only to show that $[S^{-1}R]_n=S^{-1}R_n$ for any $n\ge 0$.
This holds true for $n=0, 1$ by Lemma \ref{4.2}.
Suppose that $n\ge 2$ and the assertion holds true for $n-1$.
Because $\rmQ(R_{n-1})=\rmQ(R)$ and $\dim \rmQ(R)=0$, $R_{n-1}$ also satisfies $(S_1)$. Therefore, by Lemma \ref{4.2}, we have
$$[S^{-1}R]_n=[[S^{-1}R]_{n-1}]_1=[S^{-1}[R_{n-1}]]_1=S^{-1}[[R_{n-1}]_1]=S^{-1}R_n,$$
as desired.
\end{proof}

From now on, we deals with a generalization of Zariski's conjecture, that is the question of when the equality $R^a=R^*$ holds. We set $X_1(R) = \{ P \in \Spec R \mid \depth R_P \le 1\}$. 

\begin{prop}
Suppose that $\overline{R}$ is a finitely generated $R$-module. Then the following conditions are equivalent. 
\begin{enumerate}[$(1)$]
\item The equality $R^a = R^*$ holds in $\overline{R}$. 
\item The local ring $\left[R^a\right]_P$ is strictly closed for every $P \in X_1(R^a)$. 
\end{enumerate}
\end{prop}

\begin{proof}
Because $R\subseteq R^a\subseteq R^*$, we have $[R^a]^*=R^*$. Therefore, $R=R^*$ if and only if $R^a=[R^a]^*$ in $\overline{R}$, and the latter conditon is equivalent to  $\left[R^a\right]_P$ is strictly closed for every $P \in X_1(R^a)$ by \cite[Proposition 4.2]{CCCEGIM}.
\end{proof}

\fi

The first example is stated as follows.  

\begin{thm}\label{4.5}
Let $U = k[X, Y]$ be the polynomial ring over a field $k$, and we set $R = k[X^n, XY^{n-1}, Y^n]\subseteq U$ $(n\geq 4)$. Then the following assertions hold true.
\begin{enumerate}[$(1)$]
\item
$\overline{R} = k[X^{n-i}Y^i \mid 1\leq i \leq n ]$ and $\overline{R}  = R + \sum_{i=1}^{n-2} RX^{n-i}Y^i$. 
\item The strict closure $R^*$ of $R$ in $\overline{R}$ is given by
$$
R^* =  
\begin{cases}
\ R + RX^7Y^5 & (n = 4),\\
\ R + \sum_{i=1}^{n-3} RX^{2n-i}Y^{n+i} + \sum_{i=1}^{n-4} RX^{n-i}Y^{2n+i} & (n\geq 5).
\end{cases}
$$
\item
$R$ is a Cohen-Macaulay ring, but $R^*$ is not Cohen-Macaulay. 
\end{enumerate}
\end{thm}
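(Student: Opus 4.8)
The plan is to work in the semigroup ring $R = k[X^n, XY^{n-1}, Y^n]$ inside $U = k[X,Y]$ and proceed through the three assertions in order, using Theorem \ref{2.6} as the main engine for the upper bound in (2) and explicit tensor computations for the reverse inclusion. First I would establish (1). The integral closure $\overline{R}$ of a normal affine semigroup ring is the semigroup ring on the saturation of the monoid generated by $(n,0)$, $(1,n-1)$, $(0,n)$; since $\gcd$ of the relevant lattice data forces every lattice point $(n-i,i)$ with $0 \le i \le n$ into the saturation, one gets $\overline{R} = k[X^{n-i}Y^i \mid 0 \le i \le n]$, and I would note $X^n, Y^n, XY^{n-1}$ already lie in $R$, so $\overline{R} = R + \sum_{i=1}^{n-2} R X^{n-i}Y^i$ with generating set $\{1, f_1, \dots, f_{n-2}\}$ where $f_i = X^{n-i}Y^i$. (The index $i$ runs over $1,\dots,n-2$ because $i=0,n-1,n$ correspond to $X^n$, $XY^{n-1}$, $Y^n \in R$.)

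For the upper bound half of (2), I would compute a presentation matrix $\mathbb M$ of $\overline{R}$ over $R$ relative to $\varepsilon = [1\ f_1\ \cdots\ f_{n-2}]$, read off the ideal $J$ of $\overline{R}$ generated by the entries of the first row of $\mathbb M$, and invoke Theorem \ref{2.6} to conclude $R^* \subseteq R + J$. Concretely, the syzygies among $1, X^{n-1}Y, \dots, X^2Y^{n-2}$ over $R$ come from relations of the form $(X^n)(X^{n-i}Y^i) = (X^{n-j}Y^j)(X^{i+j-n}\cdots)$ type products and products $f_i f_j = X^{2n-i-j}Y^{i+j}$ which lie in $R$ precisely when the exponent vector is in the monoid of $R$. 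I expect the first-row entries of $\mathbb M$ to be exactly the monomials $X^{2n-i}Y^{n+i}$ for $1 \le i \le n-3$ together with $X^{n-i}Y^{2n+i}$ for $1 \le i \le n-4$ (and, in the degenerate case $n=4$, just $X^7Y^5$), matching the claimed $R^*$; this bookkeeping — checking which products $f_i f_j$ and which ``carrying'' relations land in $R$ versus force a nonzero first-row entry — is the technical heart and the case split at $n=4$ versus $n \ge 5$ arises because for small $n$ the two families of exponents degenerate or coincide. For the reverse inclusion $R + (\text{those monomials}) \subseteq R^*$, I would exhibit, for each candidate generator $g = X^aY^b$, an explicit element of $\Ker(\overline{R}\otimes_R \varepsilon)$ whose first coordinate is $g$ — i.e. write the column $(g, 0, \dots, -X^n \text{ or similar}, \dots, 0)^{\mathrm{tr}}$ as an $\overline{R}$-linear combination of the columns of $\mathbb M$, exactly as in the worked Example with $R = k[X,Y]/(Y^\ell)$ — and deduce $g \otimes 1 = 1 \otimes g$ in $\overline{R}\otimes_R\overline{R}$.

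Finally, for (3): $R = k[X^n, XY^{n-1}, Y^n]$ is a $3$-generated affine semigroup ring of dimension $2$, and I would show $X^n, Y^n$ form a regular sequence (they are part of a homogeneous system of parameters and $R$ is a domain of dimension $2$), hence $R$ is Cohen--Macaulay. For the non-Cohen--Macaulayness of $R^*$, I would compute $\depth R^*$ directly: $R^*$ is a $2$-dimensional graded domain, so it fails $(S_2)$ — equivalently $\depth R^* = 1$ — iff $R^* \subsetneq \overline{R}$ and the inclusion is not an isomorphism in codimension one, or more concretely by locating a graded prime $P$ of height $1$ where the local ring has depth $0$ after quotienting, i.e. by finding a nonzero socle element modulo a single parameter. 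Since $R^* \ne \overline{R}$ (the generators $X^{n-1}Y$ etc. are missing) but $R^*$ and $\overline{R}$ agree after inverting $X$ or $Y$, the punctured spectrum argument shows $H^1_{\mathfrak m}(R^*) \ne 0$, so $\depth R^* = 1 < 2 = \dim R^*$, giving non-Cohen--Macaulayness. The main obstacle I anticipate is item (2): carefully producing the presentation matrix $\mathbb M$ and correctly identifying its first-row entries (hence $J$) in a way that is uniform in $n$ yet accounts for the $n=4$ exception — the reverse inclusion, while computationally heavy, is a direct mimicry of the template already established in Section 2.
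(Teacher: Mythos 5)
Your plan for assertions (1) and (2) is essentially the paper's own proof. In (1) you identify $\overline R$ as the normalization of the affine semigroup ring by saturating the monoid, which is what the paper does by exhibiting $S=k[X^{n-i}Y^i\mid 0\le i\le n]$ as a normal domain with $Q(S)=Q(R)$. In (2) you apply Theorem~\ref{2.6}, read off the first-row ideal $J$, and then produce explicit elements of $\Ker\sigma$ for the reverse inclusion, mimicking the template from the $k[X,Y]/(Y^\ell)$ example; this is exactly the paper's route. One small caveat: the first-row entries of $\mathbb M$ are not literally the monomials $X^{2n-i}Y^{n+i}$ and $X^{n-i}Y^{2n+i}$ as you guess. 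The paper proves (Claim~\ref{1}) that $J\overline R$ is generated by just the two monomials $X^nY^n$ and $(XY^{n-1})^2$; the list you wrote down is what $R+J\overline R$ looks like after expanding as an $R$-module. This is a presentational point, not a gap, but the cleaner description of $J\overline R$ is what makes the semigroup bookkeeping tractable.

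Assertion (3) is where there is a genuine error. You argue that $R^*$ and $\overline R$ ``agree after inverting $X$ or $Y$,'' so the punctured spectra coincide and the finite-length cokernel forces $H^1_{\mathfrak m}(R^*)\ne 0$. This is false: $\overline R/R^*$ does \emph{not} have finite length, because $R^*$ and $\overline R$ disagree at a height-one prime. Concretely, a monomial $X^aY^b$ lies in $R^*$ only if $b$ is a nonnegative integer combination of the $Y$-exponents $n-1,\,n,\,n+1,\dots,2n-3,\,2n+1,\dots,3n-4$ of the generators, and $b=1$ is never such a combination; hence $X^{nm+n-1}Y\notin R^*$ for \emph{every} $m\ge 0$. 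So $X^{n-1}Y$ does not lie in the homogeneous localization of $R^*$ at the graded height-one prime lying under $(Y)\overline R$, and $(\overline R/R^*)$ has a height-one associated prime. Thus the ``agreement on the punctured spectrum'' step breaks down, and the inference $H^1_{\mathfrak m}(R^*)\cong H^0_{\mathfrak m}(\overline R/R^*)\ne 0$ is not justified by what you wrote. (It is in fact true that $H^1_{\mathfrak m}(R^*)\ne 0$, but proving it requires exhibiting an explicit element of $\overline R\setminus R^*$ killed into $R^*$ by a power of $\mathfrak m$, e.g.\ $X^{n-1}Y^{n+1}$, which is essentially the same computation as the paper's direct argument.)

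The paper's proof of (3) avoids all of this: since $R^*$ is graded and module-finite over the polynomial subring $k[X^n,Y^n]$, $R^*$ is Cohen--Macaulay if and only if $X^n,Y^n$ is an $R^*$-regular sequence. The paper then exhibits the failure of regularity directly, namely $X^{2n-1}Y^{n+1}\in R^*$, $Y^n\cdot X^{2n-1}Y^{n+1}=X^n\cdot X^{n-1}Y^{2n+1}\in X^nR^*$, yet $X^{2n-1}Y^{n+1}\notin X^nR^*$ because $X^{n-1}Y^{n+1}\notin R^*$. You should replace your punctured-spectrum argument with this kind of explicit regular-sequence check, or equivalently exhibit a specific socle element of $H^1_{\mathfrak m}(R^*)$; the codimension-one agreement you invoked simply does not hold.
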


\begin{proof}
We consider $U$ as a $\bbZ^2$-graded ring under the grading $U_{\mathbf 0}=k$, $X\in U_{\mathbf e_1}$, and $Y\in U_{\mathbf e_2}$ 
where ${\mathbf e_1}=
\left(\begin{smallmatrix}
1 \\
0
\end{smallmatrix}\right)$
and 
${\mathbf e_2}=
\left(\begin{smallmatrix}
0 \\
1
\end{smallmatrix}\right)$.
By setting 
$$
H=\{\left(\begin{smallmatrix}
a \\
b
\end{smallmatrix}\right)\in \bbZ^2
\mid a, b\ge 0, \ X^aY^b\in R \},
$$
we see that $R=k[H]$ is a graded subring of $U$. We set $S = k[X^{n-i}Y^i \mid 1\leq i \leq n ]$.

 $(1)$  Since $\frac{Y}{X}=\frac{Y^n}{XY^{n-1}}\in\rmQ(R)$, we have $R\subseteq S \subseteq \rmQ(R)$. This implies $\rmQ(S)=\rmQ(R)$. Moreover, because $S=R + \sum_{i=1}^{n-2} RX^{n-i}Y^i$ and $S$ is a normal domain, we get $S=\overline{R}$.

$(2)$  We now consider a presentation 
$$
R^{\oplus q} \overset{\Bbb M}{\longrightarrow} R^{\oplus (n-1)} \overset{\varepsilon}{\longrightarrow} S \longrightarrow 0
$$
of $S$ as an $R$-module, where $q > 0$ and $\varepsilon = \left[1 \ X^{n-1}Y^1\ X^{n-2}Y^2 \ \cdots \ X^2Y^{n-2}\right]$.
Then, by Theorem \ref{2.6}, we have 
$$
R^*\subseteq R+JS \ \ \text{in} \ \ S
$$ 
where $J$ denotes the ideal of $S$ generated by all the entries of the first row of $\Bbb M$. 

\begin{claim}\label{1}
$JS=(X^nY^n, (XY^{n-1})^2)S$.
\end{claim}
\begin{proof}[Proof of Claim \ref{1}]
Notice that $J$ is a graded ideal of $R$, because $J=R\cap \left( \sum_{i=1}^{n-2} RX^{n-i}Y^i \right)$. Since
$$\varepsilon\left(\begin{smallmatrix}
X^nY^n \\
-XY^{n-1} \\
0 \\
\vdots \\
0
\end{smallmatrix}\right) = 0\ \text{and}\ 
\varepsilon\left(\begin{smallmatrix}
(XY^{n-1})^2 \\
0 \\
\vdots \\
0 \\
-Y^n
\end{smallmatrix}\right) = 0,
$$
we then have $(X^nY^n, (XY^{n-1})^2)S\subseteq JS$. Conversely, for each $a\in J$, we show that $a\in (X^nY^n, (XY^{n-1})^2)S$. Indeed, we may assume $a\neq 0$ and $a$ is a homogeneous element. Let us write $a=X^\alpha Y^\beta$ with  $\left(\begin{smallmatrix}
\alpha \\
\beta
\end{smallmatrix}\right)\in H$. Then, because $a=X^\alpha Y^\beta \in  \sum_{i=1}^{n-2} RX^{n-i}Y^i$, 
$$
\left(\begin{smallmatrix}
\alpha \\
\beta
\end{smallmatrix}\right)=\left(\begin{smallmatrix}
\alpha_1 \\
\beta_1
\end{smallmatrix}\right)+
\left(\begin{smallmatrix}
n-i \\
i
\end{smallmatrix}\right)=
\left(\begin{smallmatrix}
\alpha_1+n-i \\
\beta_1+i
\end{smallmatrix}\right)$$
for some $\left(\begin{smallmatrix}
\alpha_1 \\
\beta_1
\end{smallmatrix}\right)\in H$
and $1\le i\le n-2$.
In addition, because $a=X^\alpha Y^\beta \in R$, we  write 
$$\left(\begin{smallmatrix}
\alpha \\
\beta
\end{smallmatrix}\right)=
\gamma_1\left(\begin{smallmatrix}
n \\
0
\end{smallmatrix}\right)+
\gamma_2\left(\begin{smallmatrix}
1 \\
n-1
\end{smallmatrix}\right)+
\gamma_3\left(\begin{smallmatrix}
0 \\
n
\end{smallmatrix}\right)
=
\left(\begin{smallmatrix}
\gamma_1n+\gamma_2 \\
\gamma_2(n-1)+\gamma_3n
\end{smallmatrix}\right)
$$
for some $\gamma_i\ge 0$. We now assume $a\notin (X^nY^n, (XY^{n-1})^2)S$, and seek a contradiction. Notice that $\gamma_2\le 1$.
If $\gamma_3\neq 0$, then we have $\gamma_1=0$, because $a\notin X^nY^nS$. This implies 
$$
\gamma_2=\alpha=\alpha_1+n-i\ge 2
$$
which is impossible. Thus $\gamma_3=0$. In contrast, because $\beta=\gamma_2(n-1)=\beta_1+i\ge 1$, we have $\gamma_2=1$ and $\beta=n-1=\beta_1+i$. However, since $\left(\begin{smallmatrix}
\alpha_1 \\
\beta_1
\end{smallmatrix}\right)\in H$, $\beta_1=0$ or $\beta_1\ge n-1$, which contradicts $n-1=\beta_1+i$ and $1\le i\le n-2$. Therefore, $a\in (X^nY^n, (XY^{n-1})^2)S$, as desired.
\end{proof}

To sum up this argument, we obtain
\begin{eqnarray*}
R^* & \subseteq &R+(X^nY^n, (XY^{n-1})^2)S \\
& =&\begin{cases}
\ R + RX^7Y^5 & (n = 4)\\
\ R + \sum_{i=1}^{n-3} RX^{2n-i}Y^{n+i} + \sum_{i=1}^{n-4} RX^{n-i}Y^{2n+i} & (n\geq 5)
\end{cases}.
\end{eqnarray*}

On the other hand, by applying the functor $ S \otimes_R (-)$ to the presentation of $S$, we have the following diagram 
$$
\xymatrix{
 S \otimes_R R^{\oplus(n-1)} \ar[r]^{\ \ \ 1 \otimes \varepsilon}  & S \otimes_R S  \ar[r]^{} &  0\\
S^{\oplus(n-1)} \ar[u]^{\cong} \ar[ru]^{\sigma} & & 
}
$$
of $S$-modules. We define  
$$
Z_i=X^{2n-i}Y^{n+i}{\mathbf e_0}-X^{n+1}Y^{n-1}{\mathbf e_{i+1}},\ \text{and}\ \ W_j=X^{n-j}Y^{2n+j}{\mathbf e_0} - Y^{2n}{\mathbf e_j}\ \ \ \ \text{in $S^{\oplus (n-1)}$}
$$
for $1\le i\le n-3$ and $1\le j\le n-4$. Then 
\begin{eqnarray*}
\sigma(Z_i) &=& X^{2n-i}Y^{n+i}\otimes 1-1\otimes X^{2n-i}Y^{n+i}\\
\sigma(W_j) &=& X^{n-j}Y^{2n+j}\otimes 1-1\otimes X^{n-j}Y^{2n+j}
\end{eqnarray*}
in $S\otimes_R S$. To show the converse  $R+JS \subseteq R^*$, it is enough to prove that $Z_i, W_j\in \Ker \sigma$ for every $1\le i \le n-3$ and $1\le j\le n-4$.
Indeed, notice that the elements of the form
$$
X^nY^n{\mathbf e_0}-XY^{n-1}{\mathbf e_1},\  \ X^2Y^{2(n-1)}{\mathbf e_0}-Y^n{\mathbf e_{n-2}}, \ \ Y^n{\mathbf e_k}- XY^{n-1}{\mathbf e_{k+1}}
$$
are in $\Ker \varepsilon$ for every $1\le k\le n-3$. Then, because
\begin{eqnarray*}
Z_i &=&X^{n-i}Y^i(X^nY^n{\mathbf e_0}-XY^{n-1}{\mathbf e_1})+\sum_{k=1}^{i}X^{n-i+k}Y^{i-k}(Y^n{\mathbf e_k}- XY^{n-1}{\mathbf e_{k+1}}) \\
W_j &=& X^{n-(j+2)}Y^{j+2}(X^2Y^{2(n-1)}{\mathbf e_0}-Y^n{\mathbf e_{n-2}})-\sum_{k=1}^{n-2-j}X^{k-1}Y^{n+1-k}(Y^n{\mathbf e_{k+j-1}}- XY^{n-1}{\mathbf e_{k+j}})
\end{eqnarray*}
for any $1\le i \le n-3$ and $1\le j\le n-4$, we conclude that $Z_i, W_j\in \Ker \sigma$.

$(3)$  Since $X^n, Y^n$ forms a regular sequence on $R$, $R$ is a Cohen-Macaulay ring of dimension $2$. We have $X^{2n-1}Y^{n-1}\in R^*$ and 
$Y^n\cdot X^{2n-1}Y^{n+1}=X^n\cdot X^{n-1}Y^{2n+1}\in X^nR^*$,
but $X^{2n-1}Y^{n+1}\notin X^nR^*$. Therefore, $R^*$ is not a Cohen-Macaulay ring.
\end{proof}

 Consequently, we get the following example in which $R^a = R^*$ holds, even if $R^a$ does not satisfy the condition $(S_2)$ of Serre.  

\begin{ex}\label{3.4}
Let $k[X, Y]$ be the polynomial ring over a field. We consider $R_1 = k[X^4, XY^3, Y^4]$ and $R_2 = k[X^5, XY^4, Y^5]$. Then $R_i \subsetneq [R_i]^a = [R_i]^*$ for each $1 \leq i \leq 2$. 
\end{ex}

\begin{proof}
By Theorem \ref{4.5}, we have
\begin{eqnarray*}
{[R_1]}^* &=& R_1 + R_1X^7Y^5, \\
{[R_2]}^* &=& R_2+R_2X^9Y^6+R_2 X^8Y^7+R_2 X^4Y^{11}.
\end{eqnarray*}
Since $X^7Y^5=\frac{(X^4Y^4)^2}{XY^3}\in [R_1]^a$, we get $[R_1]^a=[R_1]^*$. Moreover, because $X^9Y^6=\frac{(X^5Y^5)^2}{XY^4}$, $X^4Y^{11}=\frac{(XY^4)^4}{XY^3}$, and $X^{13}Y^7=\frac{(X^9Y^6)^2}{X^5Y^5}$, we have 
$$
X^9Y^6, X^4Y^{11}, X^{13}Y^7\in[R_2]^a.
$$ 
Let $\xi=X^4Y-X^3Y^2+X^2Y^3+X^8Y^2-X^7Y^3\in \overline{R}$. Then $(XY^4+Y^5)\xi=X^5Y^5+X^9Y^6+X^2Y^8-X^7Y^8\in [R_2]^a$, so that
\begin{eqnarray*}
\frac{((XY^4+Y^5)\xi)^2}{XY^4+Y^5}&=&(XY^4+Y^5)\xi^2\\
&=&X^{9}Y^{6}-X^{8}Y^{7}+X^{7}Y^{8}+2X^{13}Y^{7}-2X^{12}Y^{8}+X^{17}Y^{8}-X^{16}Y^{9}\\
&+&X^{6}Y^{9}-X^{5}Y^{10}+X^{4}Y^{11}+2X^{10}Y^{10}-X^{9}Y^{11}-X^{15}Y^{10}+X^{14}Y^{11}\\
&\in& [R_2]^a.
\end{eqnarray*}
This implies $X^8Y^7\in [R_2]^a$, and hence $[R_2]^a=[R_2]^*$.
\end{proof}

Closing this paper we explore the second example. This shows, even if the base ring $R$ satisfies the condition $(S_2)$ of Serre, the equality $R^a = R^*$ does not hold in general.

Let $\rmH_1(R)$ be the set of all height one prime ideals of a Noetherian ring $R$. For a Noetherian local ring $R$, we denote by $v(R)$ (resp. $\rme(R)$) the embedding dimension (resp. the multiplicity) of $R$.

\begin{thm}
Let $U = k[X, Y, Z]$ be the polynomial ring over a field $k = \Bbb Z/(2)$ and consider $R = U/(XY(X+Y))$. We denote by $x, y, z$ the images of $X, Y, Z$ in $R$, respectively. We set $A = k[x, y]$. Then the following assertions hold true.
\begin{enumerate}[$(1)$]
\item
$\overline{R} = R/xR\times R/yR \times R/(x+y)R$ and $\overline{A} = A/xA \times A/yA \times A/(x+y)A$.
\item The strict closure $A^*$ of $A$ in $\overline{A}$ is given by 
$$
A^* = A + \m\overline{A} = A + k\rho
$$ and $\ell_A(A^*/A) = 1$, where $\m = (x, y)A$ and $\rho = (\overline{y}, 0, 0) \in \overline{A}$.
\item
$A = A^a \subsetneq A^*$.
\item
$R^* = A^*[z] = R + k[z]\cdot \rho$ and $R^a = R + k[z]\cdot z(1+z)\rho$. Hence, $R\subsetneq R^a \subsetneq R^*$.
\end{enumerate}
\end{thm}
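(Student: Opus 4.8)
The overall plan is to reduce everything about $R$ to the corresponding facts about $A$ via the faithfully flat extension $\varphi\colon A\to R=A[z]$, and to compute $A^*$ and $A^a$ directly from the product structure of $\overline A$. For Assertion~(1), I would note that $XY(X+Y)$ is a product of three pairwise non-associate linear forms over $k=\bbZ/(2)$ (recall $X+Y=X-Y$), so $A$ and $R$ are reduced with minimal primes $(x),(y),(x+y)$, and each of $A/xA$, $A/yA$, $A/(x+y)A$ (resp.\ $R/xR$, $R/yR$, $R/(x+y)R$) is a polynomial ring over $k$ in one variable (resp.\ two), hence normal. Thus the total ring of fractions splits over the minimal primes and the integral closure is the asserted product. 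In coordinates $\overline A=k[u]\times k[v]\times k[w]$ with $u=\overline y$, $v=\overline x$, $w=\overline x$, the inclusion $A\hookrightarrow\overline A$ reads $x\mapsto(0,v,w)$, $y\mapsto(u,0,w)$, and $\overline R=\overline A[z]$.

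For Assertion~(2), write $\fkp_1=(x)$, $\fkp_2=(y)$, $\fkp_3=(x+y)$. I would compute $\overline A\otimes_A\overline A=\bigoplus_{i,j}A/(\fkp_i+\fkp_j)$; the diagonal summands reproduce $\overline A$, while $\fkp_i+\fkp_j=\fkm:=(x,y)A$ for $i\neq j$, so the six off-diagonal summands are copies of $A/\fkm=k$. On the $(i,j)$-summand ($i\neq j$) the map $\sigma$ becomes $\alpha\mapsto(\text{image of }\alpha_i\text{ in }A/\fkm)-(\text{image of }\alpha_j\text{ in }A/\fkm)$, so $A^*=\Ker\sigma$ is the set of triples whose three components have a common image in $A/\fkm=k$; this equals $A+\fkm\overline A$ (one inclusion also being Proposition~\ref{2.2}, applied to the maximal ideal $\fkm$), and a graded count --- $A=\overline A$ in every total degree $\geq2$, differing by one dimension in degree $1$ --- gives $\ell_A(A^*/A)=1$ and $A^*=A+k\rho$ with $\rho=(\overline y,0,0)$. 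For Assertion~(3) I would check that $A$ is weakly Arf, so $A^a=A$: for $s\in W(A)$ and $p,q\in A$ with $p/s,q/s\in\overline A$, membership of $pq/s$ in $A$ is recorded by the two conditions ``the three components take a common value at the closed point'' and ``their transverse first derivatives there sum to zero''; when $s$ is a unit at that point these follow formally from $p,q,s\in A$, and when it is not, the relation $\sum_i s_i'(0)=0$ forces the number of components of $s$ vanishing to first order to be $0$ or $2$, and in the only delicate case the obstruction is an equality of leading coefficients, which holds because every nonzero scalar in $k=\bbZ/(2)$ equals $1$ --- this is the one place the choice $k=\bbZ/(2)$ is essential. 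Finally $A\subsetneq A^*$ since $\rho\notin A$.

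For Assertion~(4), $\varphi\colon A\to A[z]$ is faithfully flat between Noetherian rings, $A$ is reduced, $\overline A$ is module-finite over $A$, and for every module-finite $A$-algebra $A'$ and $P\in\Spec(A'[z])$ the fibre ring $A'[z]_P/\fkp A'[z]_P$ ($\fkp=P\cap A'$) is a localization of $\kappa(\fkp)[z]$, hence regular; so Theorem~\ref{3.1} applies and yields $R^*=A^*[z]$ inside $\overline R=\overline A[z]$, i.e.\ $R^*=R+k[z]\rho$. I would then observe that $R^*/R\cong k[z]$ as an $R$-module --- here $x\rho=0$ and $y\rho=\rho^2\in R$, so $R$ acts through $R\twoheadrightarrow R/(x,y)R=k[z]$ --- whence any ring between $R$ and $R^*$, in particular $R^a$, equals $R+g(z)k[z]\rho$ for some $g\in k[z]$. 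For the bound $z(1+z)\mid g$ I would show the overring $T:=R+z(1+z)k[z]\rho$ is itself weakly Arf, hence contains $R^a$: an element $\eta$ of $\overline R$ lies in $R^*$ iff its three components agree on restriction to the $z$-axis, and the isomorphism $R^*/R\xrightarrow{\sim}k[z]$ sends the class of $\eta$ to the sum $\sum_i\partial_i\eta_i(0,z)$ of the transverse derivatives; a direct computation then shows that for $\eta=pq/s$ with $p,q,s\in T$, $s\in W(T)$, $s\mid p,q$ in $\overline R$, this class equals $\widetilde Q\,h_p+\widetilde P\,h_q+\widetilde P\widetilde Q\,h_s$, where $S,P,Q\in k[z]$ are the common restrictions of $s,p,q$ to the $z$-axis, $P=S\widetilde P$, $Q=S\widetilde Q$, and $h_p,h_q,h_s\in z(1+z)k[z]$ are the classes of $p,q,s$; hence it lies in $z(1+z)k[z]$ and $\eta\in T$. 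For the reverse bound I would exhibit the element directly: with $s=(1+z)y+x\in W(R)$ and $p=z(1+z)y\in R$ one has $p/s=(\overline z,0,1+\overline z)\in\overline R$, so $p^2/s\in R_1$, and $p^2/s\equiv z(1+z)\rho\pmod R$; since $(z(1+z)\rho)^2\in R$ this forces $R+z(1+z)k[z]\rho\subseteq R^a$. Combining, $R^a=R+z(1+z)k[z]\rho$, and $R\subsetneq R^a\subsetneq R^*$ because $z(1+z)\rho\notin R$ and $\rho\notin R^a$.

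The main obstacle is Assertion~(4): pinning down $R^a$ precisely demands a weakly-Arf verification not for $R$ itself but for the candidate overring $T=R+z(1+z)k[z]\rho$, together with the explicit production of an element whose class in $R^*/R\cong k[z]$ is exactly $z(1+z)$; the weakly-Arf check for $A$ underlying Assertion~(3) is also delicate, and it is precisely there that the hypothesis $k=\bbZ/(2)$ enters.
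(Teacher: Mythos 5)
Your proposal diverges from the paper at almost every step, and much of it is a cleaner route. For~(2), the paper argues $A\neq A^*$ via the Arf criterion $v(A_\m)<\rme(A_\m)$ together with the one-dimensional case of Zariski's conjecture, then sandwiches $A^*$ using Corollary~\ref{2.3}; your identification of $\Ker\sigma$ with the triples of equal constant term, via the decomposition $\overline A\otimes_A\overline A\cong\bigoplus_{i,j}A/(\fkp_i+\fkp_j)$, is self-contained and more transparent. For~(3) the paper quotes the computation of $\widehat{A_\m}$ from \cite{CCCEGIM}; your sketch is essentially a re-derivation, but it is much too compressed --- to turn it into a proof you would need to exhibit the linear condition (over $\Bbb Z/(2)$, namely $a_1+a_2+a_3=0$ on the first-order jets) cutting $A$ out of $\overline A$ modulo the conductor $\fkm^2\overline A$, and carry out the case split on the order of vanishing of $s$ in each branch honestly. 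For the first half of~(4) the paper invokes \cite[Lemma~4.9]{CCCEGIM} where you invoke Theorem~\ref{3.1}; both are valid, and your verification of the fibre-regularity hypothesis for $A\to A[z]$ is correct. For $R^a$, the paper writes $T=T_1\cap T_2$, proves $T_1\cong T_2$, and then verifies that $T_1$ is weakly Arf by localizing at its singular maximal ideal with the help of the presentation $T_1\cong R[t]/(xt,\,yt-y(x+y)z,\,t^2-yzt)$; your plan to test $T$ directly via a closed formula for the class of $pq/s$ in $R^*/R\cong k[z]$ is a genuine structural simplification when it applies.

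But it does not always apply, and that is the gap. The formula $\widetilde Q h_p+\widetilde P h_q+\widetilde P\widetilde Q h_s$, with $P=S\widetilde P$ and $Q=S\widetilde Q$, presupposes $S\neq0$; only then is $\widetilde P=\tilde p_i(0,z)$ independent of $i$ and an honest element of $k[z]$. Nonzerodivisors $s\in W(T)$ with $S=0$ do occur --- already $s=x^2+xy+y^2\in W(R)$ has branches $(t_1^2,t_2^2,t_3^2)$, so $S\equiv0$ --- and for such $s$ the elements $\tilde p_i(0,z)$ genuinely depend on $i$, so your formula is undefined. The desired conclusion is still true: writing $s_i=t_i\sigma_i$, the class of $\eta=pq/s$ becomes $\sum_i\sigma_i(0,z)\tilde p_i(0,z)\tilde q_i(0,z)$, and since membership in $z(1+z)k[z]$ is equivalent to vanishing at $z=0$ and $z=1$, one reduces to the elementary $\Bbb F_2$-fact that $\sum_iu_i=\sum_iu_ia_i=\sum_iu_ib_i=0$ forces $\sum_iu_ia_ib_i=0$ for $u,a,b\in\Bbb F_2^3$. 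This separate case must be supplied; as written, your argument proves $T$ weakly Arf only for nonzerodivisors whose common restriction to the $z$-axis is nonzero, which is not enough.
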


\begin{proof}
The assertion $(1)$ follows from the fact that both rings $R$ and $A$ are reduced. Passing to the canonical injections  
$$A/xA\to R/xR,\ \ A/yA\to R/yR,\ \ \text{and}\ \ A/(x+y)A\to R/(x+y)R,$$
we may assume that $\overline{A} \subseteq \overline{R}$.

$(2)$ Note that $\m \in \rmH_1(A)$. Since $v(A_\m)=2<3={\rme}(A_\m)$, the local ring $A_\m$ is not Arf. Therefore, because $A$ is a Cohen-Macaulay ring of dimension one, $A\neq A^*$ by \cite[Theorem 4.5]{CCCEGIM}. Moreover, by Corollary \ref{2.3}, we have 
$$
A^*\subseteq A+\m\overline{A}=A+k\rho
$$
where $\rho=(\overline{y}, 0, 0) \in \overline{A}$. Since $\ell_A(A+k\rho/A)=1$ and $A\subsetneq A^*\subseteq A+k\rho$, we get $A^*=A+\m\overline{A}=A+k\rho$ and $\ell_A(A^*/A)=1$.

$(3)$ Notice that $A:\overline{A}=\m^2$. In fact, since $\overline{A}=A/xA\times A/yA \times A/(x+y)A$,  for each $\alpha\in A$, we have $\alpha \overline{A}\subseteq A$ if and only if there exists $a, b, c \in A$ such that 
\begin{eqnarray*}
&\alpha-a\in xA,\ \ a\in yA\cap (x+y)A=y(x+y)A,\\
&\alpha-b\in yA,\ \ b\in xA\cap (x+y)A=x(x+y)A,\\
&\alpha-c\in (x+y)A,\ \ \text{and} \ \  c\in xA\cap yA=xyA.
\end{eqnarray*}  
The latter condition is equivalent to saying that $\alpha\in (x, y^2)A\cap (y, x^2)A \cap (xy, x+y)A=\m^2$. Hence $A:\overline{A}=\m^2$ as claimed.  
Therefore, for each $\fkp \in {\rmH}_1(A)$ with $\fkp\neq \m$, $A_\fkp=\overline{A_\fkp}$, and hence $A_\fkp$ is a weakly Arf ring. 
Furthermore, since the $\m$-adic completion $\widehat{A_{\m}}=k[[x, y]]$ of $A_\m$ is a weakly Arf ring, $A_\m$ is also weakly Arf (\cite[Proposition 3.3, Example 9.6]{CCCEGIM}). Hence, by \cite[Theorem 2.6]{CCCEGIM}, $A$ is a weakly Arf ring.

$(4)$  
Since $z\in R$ is transcendental over $A$ and $\overline{R}\subseteq \rmQ(A)[z]$, we have $\overline{R}=A[z]$ and
$$
R^*=A^*[z]=R+k[z]\cdot \rho
$$
by \cite[Lemma 4.9]{CCCEGIM}. Let $\alpha=x+yz \in R$ and $\xi=(1, \overline{z}, 0) \in \overline{R}$. Then $\alpha\in W(R)$ and $\alpha\xi=(\overline{yz}, \overline{xz}, 0)=(x+y)z\in R$, where $\overline{*}$ is the image of $*$ in the corresponding ring. Therefore, we have
$$
\frac{(\alpha\xi)^2}{\alpha}=\alpha\xi^2=(\overline{yz}, \overline{xz^2}, 0)=(x+y)z^2+z(1+z)\rho \in R^a
$$
so that $R+k[z]\cdot z(1+z)\rho\subseteq R^a$.

To show the converse,  we need the following. 


\begin{lem}\label{4.8}
Let $R$ be an arbitrary commutative ring, and let $\{R_\lambda\}_{\lambda\in \Lambda}$ be a family of intermediate rings between $R$ and  $\overline{R}$ the integral closure of $R$ in $\rmQ(R)$. If $R_\lambda$ is a weakly Arf ring for every $\lambda\in \Lambda$, then so is $\underset{\lambda\in\Lambda}{\bigcap} R_\lambda$.
\end{lem}

\begin{proof}
Let $B=\underset{\lambda\in\Lambda}{\bigcap} R_\lambda$. Then $\rmQ(B)=\rmQ(R)$ and $\overline{B}=\overline{R}$. Let $a, b, c \in B$ such that $a\in W(B)$ and $\frac{b}{a}, \frac{c}{a}\in \overline{B}=\overline{R}$. 
We then have $a\in W(R_\lambda)$, $b, c \in R_\lambda$, and $\frac{b}{a}, \frac{c}{a}\in \overline{R}=\overline{R_\lambda}$ for every $\lambda\in \Lambda$. Because $R_\lambda$ is weakly Arf, we have $\frac{bc}{a}\in R_\lambda$ for every $\lambda\in\Lambda$. Therefore $\frac{bc}{a}\in B$, and hence $B$ is a weakly Arf ring. 
\end{proof}

Let $T_1=R+k[z]\cdot z\rho$, $T_2=R+k[z]\cdot (1+z)\rho$, and $z_1=1+z$. Then, because $T_2=R+k[z_1]\cdot z_1\rho$ and $z_1$ is transcendental over $k$, we have an isomorphism
$$
T_1\cong T_2
$$ of rings. Hence, if $T_1$ is a weakly Arf ring, then so is $T_2$. Lemma \ref{4.8} shows  $T_1\cap T_2=R+k[z]\cdot z(1+z)\rho$ is also a weakly Arf ring. Consequently, we get 
$$
T_1\cap T_2=R+k[z]\cdot z(1+z)\rho=R^a
$$
because $T_1\cap T_2\subseteq R^a$ and $T_1\cap T_2$ is a weakly Arf ring. 

Hence, it remains to show that $T_1$ is a weakly Arf ring. By setting  $M_1=(x, y, z, z\rho)T_1 \in \Max T_1$, we have an isomorphism
$$
R^*/T_1\cong T_1/M_1
$$ 
as a $T_1$-module. This implies $[T_1]_\fkp=[R^*]_\fkp$ for every $\fkp\in \Spec T_1$ with $\fkp\neq M_1$. Thus, $[T_1]_\fkp$ is a weakly Arf ring (remember that $R^*$ is a weakly Arf ring that satisfies the Serre condition $(S_1)$). 

Therefore, it is enough to show that $[T_1]_{M_1}$ is a weakly Arf ring. To do this, we need to check that 
\begin{center}
$(\overline{aT_1})^2=a(\overline{aT_1})$ for every $a \in M_1\cap W(T_1)$. 
\end{center}
Let $a \in M_1\cap W(T_1)$. Notice that $(\overline{aT_1})^2=a(\overline{aT_1})$ if and only if $\frac{\overline{aT_1}}{a}=\{ \varphi\in \overline{T_1} \mid a\varphi \in T_1 \}$ has a ring structure.
Since $a \rho\in T_1$, we then have 
$$
R^*\subseteq \frac{\overline{aT_1}}{a}.
$$
On the other hand, because $M_1\subseteq (x, y, \rho)R+zk[z]$, we can write 
\begin{center}
$a=b+gz$ with $b\in (x, y, \rho)R$ and $g\in k[z]$.
\end{center} 
If $\varphi \in \frac{\overline{aR^*}}{a}=\{ \varphi\in \overline{R^*} \mid a\varphi \in R^* \}$, then $a\varphi=b\varphi+ gz\varphi\in R^*$. As $b\varphi\in R^*$, we have $gz\varphi \in R^*$. 
If $g\neq 0$, then $\varphi \in R^*$, so that 
$$
\frac{\overline{aT_1}}{a}=\frac{\overline{aR^*}}{a}\subseteq R^*
$$
which implies $\frac{\overline{aT_1}}{a}=R^*$. Hence we may assume $g=0$. 

Hence, we can write $a=b=c_1x+c_2y+a_1$ with $c_1, c_2\in k$ and $a_1\in (xz, yz, z\rho)k[z]$. Then, for $\varphi \in\overline{T_1}$, $a\varphi\in T_1$ if and only if $(c_1x+c_2y)\varphi \in T_1$. 

Here, we need the following.

\begin{lem}
Let $R[t]$ be the polynomial ring over $R$. Then there is an isomorphism
$$
R[t]/(xt, yt-y(x+y)z, t^2-yzt)\cong T_1
$$
as an $R$-algebra.
\end{lem}
\begin{proof}
We consider the $R$-algebra map 
$$
\varphi: R[t]\to T_1=R+k[z]\cdot z\rho
$$
defined by $ \varphi(t) = z\rho$. We set $I=(xt, yt-y(x+y)z, t^2-yzt)$. Then $I\subseteq \Ker \varphi$. 
Let $\xi\in \Ker \varphi$. We write 
\begin{center}
$\xi=\overline{\alpha+\beta t}$ with $\alpha, \beta\in R$, and \\
$\beta=x\eta_1+y\eta_2+f$ with $\eta_1, \eta_2\in R$, $f\in k[z]$.
\end{center}
Because $\alpha+\beta\cdot z\rho=0$ in $T_1$, we have 
\begin{center}
$\overline{\beta\cdot z\rho}=\overline{f\cdot z\rho}=0$ in $T_1/R$
\end{center}
which implies $f=0$. Therefore, $\alpha=-\beta\cdot z \rho =-y(x+y)z\eta_2$ in $R$. Hence
$$
\alpha+\beta t=-y(x+y)z\eta_2+(x\eta_1+y\eta_2)t=\eta_1xt+\eta_2(yt-y(x+y)z)\in I.
$$
Consequently, $\xi=0$ in $R/I$ and $I=\Ker \varphi$, as desired.
\end{proof}

By using this isomorphism, we obtain
$$
\overline{T_1}=T_1/(x, yz+z\rho)T_1 \times T_1/(y, z\rho)T_1 \times T_1/(x+y, z\rho)T_1.
$$ 

Finally we reach the following, which guarantees that $T_1$ is weakly Arf. 

\begin{claim}\label{2}
We have
\begin{center}
$\frac{\overline{aT_1}}{a} =  
\begin{cases}
\ \overline{T_1} & (c_1=c_2=0)\\
\ R^*+{}_{T_1} \langle (1, 0, 0), (0, \overline{z}, 0) \rangle & (c_1=1,\ c_2=0 )\\
\ R^*+{}_{T_1} \langle (0, 0, 1), (\overline{z}, 0, 0) \rangle & (c_1=c_2=1 )\\
\ R^*+{}_{T_1} \langle (0, 1, 0), (0, \overline{z}, 0) \rangle & (c_1=0,\ c_2=1 )\\
\end{cases}
.$
\end{center}
Therefore, $(\overline{aT_1})^2=a(\overline{aT_1})$.
\end{claim}

\begin{proof}[Proof of Claim \ref{2}]
If $c_1=c_2=0$, then  $\frac{\overline{aT_1}}{a}=\overline{T_1}$. Suppose $c_1=1$ and $c_2=0$. Because 
$$
x\rho=0,\ \  x\cdot(1, 0, 0)=0,\ \ \text{ and}
$$
$$
x\cdot (0, \overline{z}, 0)=(0, \overline{xz}, 0)=(\overline{(x+y)z+z\rho}, \overline{(x+y)z+z\rho}, \overline{(x+y)z+z\rho})=(x+y)z+z\rho\ \ \text{ in}\ \  \overline{T_1},
$$ 
we have $R^*+{}_{T_1} \langle (1, 0, 0), (0, \overline{z}, 0) \rangle\subseteq \frac{\overline{aT_1}}{a} $. Conversely, let $\varphi \in \overline{T_1}$ such that $x\varphi \in T_1$.
Then, because $\overline{T_1}=R^*+k[z]\cdot (1, 0, 0)+k[z]\cdot (0, 1, 0)$, we write $\varphi=\psi+ (\overline{f_1}, \overline{f_2}, 0)$ with $\psi \in R^*$ and $f_1, f_2\in k[z]$. In addition, let us write $f_2=c+zf_3$ with $c\in k$ and $f_3\in k[z]$. Then, since $x\alpha, (0, \overline{xzf_3}, 0)\in T_1$, we have
$$
x\varphi=x\alpha+c\cdot (0, \overline{x}, 0)+(0, \overline{xzf_3}, 0) \in T_1
$$
so that $c\cdot (0, \overline{x}, 0)\in T_1$. Since $(0, \overline{x}, 0)\notin T_1$, we get $c=0$. Therefore, $\varphi=\psi+ (\overline{f_1}, \overline{zf_3}, 0)\in R^*+{}_{T_1} \langle (1, 0, 0), (0, \overline{z}, 0) \rangle$, which implies $\frac{\overline{aT_1}}{a}=R^*+{}_{T_1} \langle (1, 0, 0), (0, \overline{z}, 0) \rangle $. Similarly we can show in other cases as well. 
\end{proof}
This completes the proof of $(4)$.
\end{proof}



\end{document}